\documentclass[aap]{imsart}

\RequirePackage{amsthm,amsmath,amsfonts,amssymb}
\RequirePackage[numbers]{natbib}
\RequirePackage[colorlinks,citecolor=blue,urlcolor=blue]{hyperref}
\RequirePackage{graphicx}
\RequirePackage{bbm}
\RequirePackage{enumitem}

\startlocaldefs
\theoremstyle{plain}
\newtheorem{theorem}{Theorem}[section]
\newtheorem{proposition}[theorem]{Proposition}
\newtheorem{lemma}[theorem]{Lemma}
\newtheorem{corollary}[theorem]{Corollary}

\theoremstyle{definition}
\newtheorem{definition}[theorem]{Definition}
\newtheorem*{remark}{Remark}
\newtheorem{example}[theorem]{Example}
\def\ci{\perp\!\!\!\perp}
\endlocaldefs

\begin{document}

\begin{frontmatter}
\title{Calibrated Probability Forecast Sequences and Measure-Valued Martingales}
\runtitle{Calibrated Probability Forecast Sequences}

\begin{aug}
\author[A]{\fnms{Thomas S.}~\snm{Wilkinson}\ead[label=e1]{tw636@exeter.ac.uk}\orcid{0009-0006-0654-6145}}
\and
\author[A]{\fnms{Christopher A. T.}~\snm{Ferro}\ead[label=e2]{c.a.t.ferro@exeter.ac.uk}\orcid{0000-0002-9830-9270}}
\address[A]{Department of Mathematics and Statistics, University of Exeter\printead[presep={,\ }]{e1,e2}}
\end{aug}

\begin{abstract}
We consider the calibration of probability forecasts. Several notions of calibration exist when the forecaster issues a single forecast for each of the observations that is to be predicted. We extend one of these notions, auto-calibration, to the common situation in which the forecaster issues a sequence of forecasts for each observation, repeatedly updating their prediction as they receive additional information. For observations that sit in any Borel space, we show that auto-calibration is equivalent to a certain sequence of random probability measures satisfying the martingale property, and we propose a simple, statistical approach to testing this property. This provides, for the first time, a way of testing the calibration of such sequences of probability forecasts.
\end{abstract}

\begin{keyword}[class=MSC]
\kwd[Primary ]{62M07}
\kwd{60G25}
\kwd[; secondary ]{60G57}
\kwd{60G42}
\end{keyword}

\begin{keyword}
\kwd{forecast}
\kwd{calibration}
\kwd{probability integral transform}
\kwd{random probability measure}
\kwd{martingale}
\kwd{revision}
\kwd{efficiency}
\end{keyword}

\end{frontmatter}

For the purpose of open access, the authors have applied a Creative Commons Attribution (CC BY) licence to any Author Accepted Manuscript version arising from this submission.

\section{Introduction}
A probability forecast takes the form of a probability measure on the set of possible values of the observation that is to be predicted. We shall denote a typical probability forecast by $\mu$, where $\mu$ is a random probability measure. This allows the forecast to be random---in which case we implicitly refer to what the forecaster would have produced in a counterfactual reality, and how likely it was for that counterfactual to have occured---but also allows the forecast to be constant, simply equal to what the forecaster actually produced.

We shall consider a property of probability forecasts called \textit{calibration}. If forecasts are calibrated then the probabilities that they assign to events match the probabilities with which those events occur in observations (see \cite{Gnt07}). In fact, there are several definitions of calibration, which vary in the strictness of the required consistency between the forecasts and observations. For example, a probability forecast, $\mu$, for an observation, $Y$, is called \textit{auto-calibrated} if $\mu = \mathcal{L}(Y | \mu)$, and it is called \textit{ideal} with respect to a sigma algebra, $\Psi$, if $\mu = \mathcal{L}(Y | \Psi)$ (see for example \cite{Tsp13} and \cite{Gnt13}).

Assessing empirically whether or not forecasts are calibrated is an important step in improving forecasting systems as it can reveal biases in the forecasts, which forecasters can seek to correct. We cannot assess whether an individual forecast is calibrated, as we could only compare that forecast to a single observation value, but we can assess a collection of forecasts as a whole. A common approach is to consider \textit{rolling-event} forecast sequences, $(\mu_1, \ldots, \mu_n)$, in which each forecast, $\mu_i$, is made for a corresponding observation, $Y_i$, and the value of $Y_i$ is known to the forecaster when they make their next forecast, $\mu_{i+1}$. Statistical properties of the forecasts are then derived under the assumption that they are calibrated and these properties are tested empirically. For example, suppose that the observations are real-valued and that the distribution function, $F_i$, associated with $\mu_i$ is continuous. Then, if $\mu_i$ is auto-calibrated for each $i$, the probability integral transform (PIT) values, $F_i(Y_i)$, are uniformly distributed on $(0, 1)$ (\cite{Gnt13}). Furthermore, if, for each $i$, $\mu_i$ is ideal with respect to the $\sigma$-algebra generated by $(Y_1, \ldots, Y_{i-1})$ then the PIT values are also independent (\cite{Dwd84}, \cite{Dbl98}). Testing these properties of PIT values is a standard way of assessing the calibration of probability forecasts.

In contrast to rolling-event sequences, there has been little discussion of calibration for \textit{fixed-event} sequences of probability forecasts, in which every forecast is made for the same observation, with the forecaster repeatedly updating their prediction as they receive more information. Such sequences of forecasts are common in many fields and potentially contain a lot of information about the quality of the forecasting system.

In \cite{nrd87}, Nordhaus considers fixed-event sequences of \textit{point} forecasts, each of which consists of a single value that the observation could take. Nordhaus defines a calibration property for the fixed-event point forecast sequence $(Q_1, \ldots, Q_n)$ called `weak efficiency', which essentially says that each forecast is unbiased and incorporates the information of all previous forecasts in the sequence. Nordhaus also defines the revision from one point forecast to the next, which is simply the latter subtract the former. He shows that if the forecasts are weakly efficient then the forecast sequence is a martingale: $\mathbb{E}[Q_{i+1}|Q_1, \ldots, Q_i] = Q_i$, or equivalently $\mathbb{E}[Q_{i+1} - Q_i|Q_1, \ldots, Q_i] = 0$. Thus we can assess whether the forecast sequence is weakly efficient by assessing whether the forecast revisions have expectation $0$ and are uncorrelated.

In \cite{mtc08}, Mitchell considers whether the same approach can be taken to testing fixed-event sequences of probability forecasts. He shows that revisions between probability forecasts as given by the Kullback--Leibler Information Criterion need not have conditional expectation $0$. This is not surprising: the KLIC is non-negative, so in order for it to have expectation $0$ it must equal $0$ almost surely, and so the current forecast must equal the previous one almost surely. Mitchell also suggests fixing an event defined in terms of the observation, and extracting from each probability forecast the probability assigned to that event; calibration of the probability forecast sequence then implies that the sequence of extracted probabilities forms a martingale, which we can assess by calculating the revisions as in Nordhaus' method.

In this paper, we develop a treatment of the calibration of fixed-event sequences of probability forecasts that is analogous to the standard treatment for rolling-event sequences, which we outlined earlier. In particular, we define a property for fixed-event sequences which we call `auto-calibration' and develop testable consequences of it in terms of `synthetic' PIT values. In a pleasing parallel with fixed-event point forecast sequences, we find that a calibrated fixed-event probability forecast sequence is a measure-valued martingale, and our results about synthetic PIT values apply to any sequence of random probability measures satisfying the martingale condition.

In section \ref{fixed_event}, we define auto-calibration of a fixed-event probability forecast sequence and show how it can be written in terms of the martingale property. In section \ref{rolling_event}, we review the results for PIT values of rolling-event sequences that we shall adapt for fixed-event sequences. In section \ref{syn_PITs}, we develop our main results about calibrated probability forecast sequences and measure-valued martingales. In section \ref{revisions}, we re-frame some of these results in terms of revisions to probability forecasts. We give examples in section \ref{examples} and conclude with a discussion in section \ref{discussion}. Throughout, let $(\Omega, \mathcal{F}, \mathbb{P})$ be a probability space.

\section{Fixed-event probability forecast sequences}\label{fixed_event}
In this section we discuss a calibration property for fixed-event probability forecast sequences which we call auto-calibration.

Let $\mu$ be a probability forecast made for an observation $Y$. As mathematical objects this means $Y$ is a random element in a Borel space $(S, \mathcal{S})$ and $\mu$ is a random probability measure on $(S, \mathcal{S})$. We require $(S, \mathcal{S})$ to be Borel (see page 14 of \cite{Kll21}) to ensure the regular conditional distribution $\mathcal{L}(Y|\Psi)$ exists and is essentially unique for any $\sigma$-algebra $\Psi \subset \mathcal{F}$, using Theorem 8.5 of \cite{Kll21}. We use the standard $\sigma$-algebra on the set of probability measures on $(S, \mathcal{S})$, which is generated by the evaluation maps $(\mu \mapsto \mu(A))$ for $A \in \mathcal{S}$. If $(S, \mathcal{S}) = (\mathbb{R}, \mathcal{B}(\mathbb{R}))$ then this standard $\sigma$-algebra is in fact generated by the maps $(\mu \mapsto \mu((-\infty, y]))$ for $y \in \mathbb{R}$, by a monotone-class argument (see Theorem 1.1 of \cite{Kll21}).

For $\Psi \subset \mathcal{F}$ a $\sigma$-algebra, we say $\mu$ is ideally calibrated given $\Psi$ if $\mu = \mathcal{L}(Y|\Psi)$ (see e.g. Definition 1 of \cite{Tsp20}). We say $\mu$ is auto-calibrated if $\mu = \mathcal{L}(Y|\mu)$ (see e.g. page 5 of \cite{Tsp13}). If $\mu$ is ideally calibrated given $\Psi$ then it is auto-calibrated, by the Tower Law of conditional expectation. Also, if $\mu$ is auto-calibrated then it is ideally calibrated given $\sigma(\mu)$. Therefore, $\mu$ is auto-calibrated if and only if there exists a $\sigma$-algebra $\Psi \subset \mathcal{F}$ such that $\mu$ is ideally calibrated given $\Psi$.

We extend these definitions to fixed-event probability forecast sequences as follows.
\begin{definition}
  Let $\mu_1, \ldots, \mu_n$ be probability forecasts all made for an observation $Y$. Let $\Psi_1 \subset \ldots \subset \Psi_n \subset \mathcal{F}$ be a filtration. Then the sequence $(\mu_1, \ldots, \mu_n)$ is ideally calibrated given $(\Psi_1, \ldots, \Psi_n)$ if, for each $i \in \{1, \ldots, n\}$,
\begin{equation*}
\mu_i = \mathcal{L}(Y|\Psi_i).
\end{equation*}
\end{definition}

\begin{definition}\label{fixed_auto-calibration}
  Let $\mu_1, \ldots, \mu_n$ be probability forecasts all made for an observation $Y$. Then the sequence $(\mu_1, \ldots, \mu_n)$ is auto-calibrated if, for each $i \in \{1, \ldots, n\}$,
\begin{equation*}
\mu_i = \mathcal{L}(Y|\mu_1, \ldots, \mu_i).
\end{equation*}
\end{definition}

An ideally calibrated sequence of probability forecasts for $Y$ has each forecast in the sequence ideally calibrated for $Y$, but additionally the information sets $\Psi_1 \subset \ldots \subset \Psi_n$ used by the forecaster must be nested; and an auto-calibrated sequence of probability forecasts for $Y$ has each forecast in the sequence auto-calibrated for $Y$, but additionally the information set $\sigma(\mu_i)$ used by the forecaster to make forecast $\mu_i$ must include all of the prior forecasts in the sequence. It seems reasonable to impose these additional requirements on the forecast sequence, since the forecaster will usually continue to have access to the data they used to make their earlier forecasts, and should keep using those data when producing their future forecasts (as long as they are still relevant).

In Proposition \ref{auto-calibration_equivalence} we will give three properties of a fixed-event probability forecast sequence which are equivalent to auto-calibration, two of which are stated in terms of the martingale property of a sequence of random probability measures. First, we give our definition of the martingale property, which is a restatement of the definition of a `martingale measure' on page 219 of \cite{Hrw85}. It is also a special case of the very general Definition 3.1.1(iii) of \cite{Htn16}, in which the conditional expectation has been defined in terms of the Bochner integral. For our purposes, the conditional expectation $\mathbb{E}[\mu|\Psi]$ of a random probability measure $\mu$ on a Borel space $(S,\mathcal{S})$ given a $\sigma$-algebra $\Psi \subset \mathcal{F}$ is defined as in Lemma 2.10(iii) of \cite{Kll17} (where we can take the ring $\hat{\mathcal{S}}$ of bounded sets to be $\mathcal{S}$, and the conditional expectation is clearly locally finite). This means $\mathbb{E}[\mu|\Psi]$ is the essentially unique random probability measure on $(S,\mathcal{S})$ satisfying $\mathbb{E}[\mu|\Psi](A) = \mathbb{E}[\mu(A)|\Psi]$ for all $A \in \mathcal{S}$. (To show that this property determines $\mathbb{E}[\mu|\Psi]$ up to almost sure equality we can use the same style of argument as for regular conditional distributions.)

\begin{definition}\label{martingale}
  Let $(S,\mathcal{S})$ be a Borel space, and let $\mu_1, \ldots, \mu_n$ be random probability measures on $(S,\mathcal{S})$. Then the sequence $(\mu_1, \ldots, \mu_n)$ is a martingale (on $(S,\mathcal{S})$) if, for each $i \in \{1,\ldots,n-1\}$,
  \begin{equation*}
    \mathbb{E}[\mu_{i+1}|\mu_1,\ldots,\mu_i] = \mu_i.
  \end{equation*}
\end{definition}

For $s \in S$ we write $\delta_s$ for the corresponding Dirac measure on $(S,\mathcal{S})$, which is given by $\delta_s(A) = \mathbbm{1}(s \in A)$ for all $A \in \mathcal{S}$.

\begin{proposition}\label{auto-calibration_equivalence}
  Let $\mu_1, \ldots, \mu_n$ be probability forecasts made for an observation $Y$. Then the following are equivalent:
  \begin{enumerate}[label=(\roman*)]
  \item there exists a filtration $\Psi_1 \subset \ldots \subset \Psi_n \subset \mathcal{F}$ such that $(\mu_1, \ldots, \mu_n)$ is ideally calibrated given $(\Psi_1, \ldots, \Psi_n)$;
  \item the sequence $(\mu_1, \ldots, \mu_n)$ is auto-calibrated;
  \item the sequence $(\mu_1, \ldots, \mu_n)$ is a martingale and $\mu_n = \mathcal{L}(Y|\mu_1, \ldots, \mu_n)$;
  \item the sequence $(\mu_1, \ldots, \mu_n, \delta_Y)$ is a martingale.
\end{enumerate}
\end{proposition}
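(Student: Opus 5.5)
We prove the cycle (i)$\Rightarrow$(ii)$\Rightarrow$(iii)$\Rightarrow$(iv)$\Rightarrow$(i). Throughout, we work setwise: two random probability measures on the Borel space $(S,\mathcal{S})$ agree almost surely as soon as they agree almost surely on each $A \in \mathcal{S}$. This is the uniqueness remark made for $\mathbb{E}[\mu|\Psi]$, which holds because $\mathcal{S}$ is countably generated, so a countable $\pi$-system suffices. With this convention every statement reduces to identities between real conditional expectations. We also use the observation that $\mathcal{L}(Y|\Psi)(A) = \mathbb{E}[\delta_Y(A)|\Psi]$, so that $\mathcal{L}(Y|\Psi) = \mathbb{E}[\delta_Y|\Psi]$ in the sense of the paper's conditional expectation of random measures.

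For (i)$\Rightarrow$(ii), fix $i$ and $A \in \mathcal{S}$. For $j \le i$, $\mu_j(A) = \mathbb{P}(Y \in A|\Psi_j)$ is $\Psi_j$-measurable, hence $\Psi_i$-measurable. Therefore $\sigma(\mu_1,\ldots,\mu_i) \subset \Psi_i$. The tower law then gives
\[
\mathbb{P}(Y\in A|\mu_1,\ldots,\mu_i) = \mathbb{E}[\mu_i(A)|\mu_1,\ldots,\mu_i] = \mu_i(A),
\]
and uniqueness over $A$ yields $\mu_i = \mathcal{L}(Y|\mu_1,\ldots,\mu_i)$.

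For (ii)$\Rightarrow$(iii), the second clause is the case $i=n$. For the martingale property, the tower law with $\sigma(\mu_1,\ldots,\mu_i)\subset\sigma(\mu_1,\ldots,\mu_{i+1})$ gives
\[
\mathbb{E}[\mu_{i+1}(A)|\mu_1,\ldots,\mu_i] = \mathbb{E}[\mathbb{P}(Y\in A|\mu_{1..i+1})|\mu_{1..i}] = \mathbb{P}(Y\in A|\mu_{1..i}) = \mu_i(A).
\]

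For (iii)$\Rightarrow$(iv), the only new condition is at the final step, namely $\mathbb{E}[\delta_Y|\mu_1,\ldots,\mu_n] = \mu_n$. By the observation above this is exactly $\mu_n = \mathcal{L}(Y|\mu_1,\ldots,\mu_n)$.

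For (iv)$\Rightarrow$(i), set $\Psi_i = \sigma(\mu_1,\ldots,\mu_i)$, which is a filtration. We show $\mu_i = \mathbb{E}[\delta_Y|\Psi_i]$ by backward induction on $i$. The case $i=n$ is the last martingale step in (iv). For $i<n$,
\[
\mathbb{E}[\delta_Y(A)|\Psi_i] = \mathbb{E}\bigl[\mathbb{E}[\delta_Y(A)|\Psi_{i+1}]\big|\Psi_i\bigr] = \mathbb{E}[\mu_{i+1}(A)|\Psi_i] = \mu_i(A).
\]
The first equality is the tower law, the second is the inductive hypothesis, and the third is the martingale property.

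The only point needing care is that the martingale definition conditions on the natural filtration of the sequence itself. For $(\mu_1,\ldots,\mu_n,\delta_Y)$ this natural filtration ends in $\sigma(\mu_1,\ldots,\mu_n)$ at step $n$, which matches $\Psi_n$, so the induction is consistent. Beyond this, the main obstacle is only the measure-theoretic bookkeeping: passing from "for each $A$, a.s." to "a.s. as measures". We handle this once via the countable generating $\pi$-system of the Borel space and the uniqueness of $\mathbb{E}[\mu|\Psi]$ stated before Definition \ref{martingale}.
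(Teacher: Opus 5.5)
Your proof is correct and uses the same ingredients as the paper. These are the tower law for (i)$\Rightarrow$(ii)$\Rightarrow$(iii), the identification of the final martingale step with $\mu_n = \mathcal{L}(Y|\mu_1,\ldots,\mu_n)$, and backward induction with $\Psi_i = \sigma(\mu_1,\ldots,\mu_i)$. The only difference is organisational: you run a single cycle and place the backward induction in (iv)$\Rightarrow$(i), whereas the paper proves (i)$\Leftrightarrow$(ii), (ii)$\Leftrightarrow$(iii) (with the induction in (iii)$\Rightarrow$(ii)) and (iii)$\Leftrightarrow$(iv) separately.
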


The proof of Proposition \ref{auto-calibration_equivalence} is straightforward, and is given in the \hyperref[appn]{Appendix}.

If $\mathcal{A}_1 \subset \ldots \subset \mathcal{A}_n \subset \mathcal{F}$ is a filtration where $\mathcal{A}_i$ represents all information available to the forecaster when they produced forecast $\mu_i$, then $(\mu_1, \ldots, \mu_n)$ being ideally calibrated given $(\mathcal{A}_1, \ldots, \mathcal{A}_n)$ means that every forecast in the sequence was the best probability forecast they could have produced when it was made. That ideally calibrated forecast sequence is then auto-calibrated, which indicates that auto-calibration is a desirable property for fixed-event probability forecast sequences. Auto-calibration in fact allows the forecaster to use only a subset $\Psi_i \subset \mathcal{A}_i$ of the information available to them and to produce the ideal forecast $\mathcal{L}(Y|\Psi_i)$, as long as $\Psi_1 \subset \ldots \subset \Psi_n$, which roughly means that when each forecast is made it has to use all of the information which was used to make the prior forecasts, at a minimum.

\begin{remark}
Tsyplakov defines conditional auto-calibration in Definition 2 of \cite{Tsp20} as follows: given a probability forecast $\mu$ made for observation $Y$, and a $\sigma$-algebra $\Delta \subset \mathcal{F}$, $\mu$ is conditionally auto-calibrated given $\Delta$ if $\mu = \mathcal{L}(Y|\Delta, \mu)$. This is equivalent to: there exists a $\sigma$-algebra $\Psi \subset \mathcal{F}$ such that $\Delta \subset \Psi$ and $\mu = \mathcal{L}(Y|\Psi)$. Using this definition we can say the fixed-event probability forecast sequence $(\mu_1, \ldots \mu_n)$ is auto-calibrated if and only if each forecast $\mu_i$ is conditionally auto-calibrated given $\sigma(\mu_1, \ldots, \mu_{i-1})$, the history of the sequence up to $\mu_i$.
\end{remark}

We now turn to the question of how to test whether a fixed-event probability forecast sequence is auto-calibrated. A possible starting point would be to extract from each forecast $\mu$ the functional value
\begin{equation*}
  \int h(y) \, \mu(dy),
\end{equation*}
for some function $h$, assuming this integral is well-defined; if the sequence of forecasts were a martingale then the resulting sequence of functional values would also be a martingale. However, we are not aware of a statistical test of the martingale property of a sequence of real numbers. In \cite{nrd87}, Nordhaus shows that if the sequence is a martingale then the `forecast revisions', each of which is one term in the sequence subtract the previous one, will all have expectation $0$ and will be uncorrelated; this can be assessed, but not tested without making further assumptions about the forecast revisions.

We will later present a method for testing auto-calibration which does not require any further assumptions about the forecasts. That method will be based on the Probability Integral Transform.

\section{Rolling-event probability forecast sequences}\label{rolling_event}
In this section, we give a technical review of the Probability Integral Transform (PIT). The PIT is used in a popular test of the calibration of rolling-event probability forecast sequences, and we prove the correctness of this test under a novel form of the null hypothesis. We will use this result in section \ref{syn_PITs} to present a test of the auto-calibration of fixed-event probability forecast sequences.

A function $F : \mathbb{R} \to \mathbb{R}$ is called a distribution function if it is non-decreasing, right-continuous and satisfies $\lim_{y \to -\infty} F(y) = 0$ and $\lim_{y \to \infty} F(y) = 1$; in other words, if it is the Cumulative Distribution Function of some $\mathbb{R}$-valued random variable. We use the standard $\sigma$-algebra on the set of distribution functions, which is generated by the evaluation maps $(F \mapsto F(y))$ for $y \in \mathbb{R}$. We write $\phi$ for the function which takes a probability measure $\mu$ on $\mathbb{R}$ to the distribution function $(y \mapsto \mu((-\infty, y]))$. The map $\phi$ is a bijection, and $\phi$ and $\phi^{-1}$ are both measurable.

For $Y$ an $\mathbb{R}$-valued random variable and $\Psi \subset \mathcal{F}$, we write $F_{Y|\Psi}$ for the regular conditional distribution of $Y$ given $\Psi$, in the form of a random distribution function; thus $F_{Y|\Psi}$ is the essentially unique random distribution function such that for all $y \in \mathbb{R}$, $F_{Y|\Psi}(y) = \mathbb{P}(Y \leq y|\Psi)$. For $F$ a random distribution function and $\Psi \subset \mathcal{F}$ a $\sigma$-algebra, we write $\mathbb{E}[F|\Psi]$ for the conditional expectation of $F$ given $\Psi$, meaning $\mathbb{E}[F|\Psi]$ is the essentially unique random distribution function such that for all $y \in \mathbb{R}$, $\mathbb{E}[F|\Psi](y) = \mathbb{E}[F(y)|\Psi]$. We can then extend Definitions \ref{fixed_auto-calibration} and \ref{martingale} to a sequence $(F_1, \ldots, F_n)$ of random distribution functions in place of a sequence $(\mu_1, \ldots, \mu_n)$ of random probability measures, in the obvious ways.

Given $F$ a distribution function and $y \in \mathbb{R}$, we write
\begin{equation*}
  F_-(y) = \lim_{\gamma \to y^-} F(\gamma),
\end{equation*}
where $\gamma \to y^-$ means the limit is taken as $\gamma$ approaches $y$ from below.

We begin by defining the following function, which we will use to define the PIT.
\begin{definition}
  The function $Z$ has arguments $F$ a distribution function, $y \in \mathbb{R}$ and $v \in [0,1]$ and takes values in $[0,1]$; it is given by
  \begin{equation*}
    Z(F, y, v) = (1-v) F_-(y) + v F(y).
  \end{equation*}
\end{definition}
The function $Z$ is measurable; to show measurability in $F$ for fixed $y$ and $v$, we can write
\begin{equation*}
  F_-(y) = \sup_{n \in \mathbb{Z}_+} F\left(y - \frac{1}{n}\right).
\end{equation*}

Note if $F$ is continuous then $Z(F, y, v) = F(y)$. When defining the PIT, and again in section \ref{revisions}, we will have a distribution function $F$ and an $\mathbb{R}$-valued random variable $Y$ and we will form the variable $Z(F, Y, V)$, where $V$ has a standard uniform distribution $\mathcal{U}([0,1])$ and is independent of $Y$. The following proposition then tells us the distribution of $Z(F, Y, V)$.

\begin{proposition}\label{distribution_of_Z}
  Let $F$ be a distribution function, and let $t \in (0,1)$. Let $y_t$ be given by
\begin{equation*}
y_t = \sup \{y \in \mathbb{R}|F(y) \leq t\},
\end{equation*}
and let $v_t$ be given by
\begin{equation*}
v_t = \begin{cases}
  \frac{t - F_-(y_t)}{F(y_t) - F_-(y_t)} & F_-(y_t) \neq F(y_t), \\
  1 & \text{otherwise.} \\
\end{cases}
\end{equation*}

  Then $0 \leq v_t \leq 1$ and $Z(F, y_t, v_t) = t$. Let $G$ be another distribution function. Then
\begin{equation*}
  \int_\mathbb{R} \int_0^1 \mathbbm{1}(Z(F, y, v) \leq t) \,dv \,dG(y) = Z(G, y_t, v_t).
\end{equation*}
\end{proposition}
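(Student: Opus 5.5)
We begin by fixing $t \in (0,1)$ and checking that $y_t$ is a finite real number. The set $\{y : F(y) \le t\}$ is nonempty because $F(y) \to 0$ as $y \to -\infty$ and $t>0$. It is bounded above because $F(y) \to 1$ as $y \to \infty$ and $t<1$. Since $F$ is non-decreasing, the set is an interval of the form $(-\infty, y_t)$ or $(-\infty, y_t]$. Right-continuity and monotonicity then give the sandwich $F_-(y_t) \le t \le F(y_t)$:
\begin{itemize}
\item For $y<y_t$ we have $F(y)\le t$, so taking $y \uparrow y_t$ gives $F_-(y_t)\le t$.
\item For $y>y_t$ we have $F(y)>t$, so right-continuity gives $F(y_t)\ge t$.
\end{itemize}
If $F_-(y_t)\neq F(y_t)$, this sandwich shows directly that $v_t\in[0,1]$, and $Z(F,y_t,v_t)=t$ by linear interpolation. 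Otherwise $F_-(y_t)=F(y_t)=t$, so $v_t=1$ and $Z(F,y_t,1)=F(y_t)=t$.

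For the integral identity, the plan is to compute the inner integral $\int_0^1 \mathbbm{1}(Z(F,y,v)\le t)\,dv$ pointwise in $y$, splitting into three cases $y<y_t$, $y=y_t$ and $y>y_t$.
\begin{itemize}
\item If $y<y_t$, then $Z(F,y,v)\le F(y)\le t$, because $y$ lies in the sublevel set (every point below the supremum of an interval unbounded below does). So the inner integral is $1$.
\item If $y>y_t$, then $F_-(y)\ge F(y')$ for any $y'\in(y_t,y)$, and $F(y')>t$. Hence $Z(F,y,v)\ge F_-(y)>t$ for all $v$, and the inner integral is $0$.
\item If $y=y_t$, then $v\mapsto Z(F,y_t,v)$ is affine and non-decreasing from $F_-(y_t)$ to $F(y_t)$. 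In the jump case, $Z\le t$ exactly when $v\le v_t$, giving measure $v_t$. In the no-jump case, $Z\equiv t$ and the integral is $1=v_t$.
\end{itemize}
So in every case the inner integral equals $\mathbbm{1}(y<y_t)+v_t\mathbbm{1}(y=y_t)$.

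Integrating this against $dG$ gives
\[
G_-(y_t)+v_t\bigl(G(y_t)-G_-(y_t)\bigr)=(1-v_t)G_-(y_t)+v_tG(y_t)=Z(G,y_t,v_t).
\]
Here we use that the measure $G$ assigns $G_-(y_t)$ to $(-\infty,y_t)$ and the atom $G(y_t)-G_-(y_t)$ to $\{y_t\}$. The joint measurability needed for the iterated integral follows from the measurability of $Z$ already noted.

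The only delicate step is the case analysis at and near $y_t$: in particular, the strict inequality $F_-(y)>t$ for $y>y_t$, and the correct treatment of the no-jump case at $y=y_t$ where $Z\equiv t$. I would check these carefully and treat the rest as routine.
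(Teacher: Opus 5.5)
Your proof is correct and follows essentially the same route as the paper: establish $F_-(y_t)\le t\le F(y_t)$, show that $Z(F,y,v)\le t$ exactly when $y<y_t$, or $y=y_t$ and $v\le v_t$, and then integrate against $dG$ to obtain $G_-(y_t)+v_t\bigl(G(y_t)-G_-(y_t)\bigr)=Z(G,y_t,v_t)$. You also check explicitly that $y_t$ is finite and that $F_-(y)>t$, not merely $F(y)>t$, for $y>y_t$, which fills in two small details the paper's proof passes over.
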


We give a proof in the \hyperref[appn]{Appendix}.

In Lemma 1 of section 5.3 of \cite{Fgs67}, Ferguson states that if $Y$ is an $\mathbb{R}$-valued random variable with CDF $F$, and if $V$ is an independent random variable with distribution $\mathcal{U}([0,1])$, then $Z(F, Y, V) \sim \mathcal{U}([0,1])$. The forward implication of the following lemma is an equivalent statement and follows immediately from Proposition \ref{distribution_of_Z}.

\begin{lemma}\label{PIT_is_uniform}
  Let $F, G$ be distribution functions. Then $F = G$ if and only if for $t \in (0,1)$,
\begin{equation*}
  \int_\mathbb{R} \int_0^1 \mathbbm{1}(Z(F,y,v) \leq t) \,dv \,dG(y) = t.
\end{equation*}
\end{lemma}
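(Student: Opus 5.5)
By Proposition \ref{distribution_of_Z}, for every $t \in (0,1)$ the double integral in the statement equals $Z(G, y_t, v_t)$, where $y_t$ and $v_t$ are built from $F$ alone. So the lemma reduces to showing that $F = G$ if and only if $Z(G, y_t, v_t) = t$ for all $t \in (0,1)$.

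\emph{Forward direction.} If $G = F$, then Proposition \ref{distribution_of_Z} gives $Z(F, y_t, v_t) = t$ directly, so the integral equals $t$.

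\emph{Reverse direction.} Suppose $Z(G, y_t, v_t) = t = Z(F, y_t, v_t)$ for all $t \in (0,1)$. The plan is to show that $F$ and $G$ agree at enough points to force $F = G$.

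First I note that $Z(\cdot, y, v)$ is a convex combination of $F_-(y)$ and $F(y)$. Fix any $y$ with $0 < F(y) < 1$ and put $t = F(y)$. I claim that $y_t$ satisfies $F(y_t) = t$ and $v_t = 1$. Indeed, $\{y' : F(y') \le t\}$ contains $y$, so $y_t \ge y$ and $F(y_t) \ge t$ by monotonicity. Right-continuity gives $F(y_t) \le t$, hence $F(y_t) = t$. If $F_-(y_t) \neq F(y_t)$ then $v_t = (t - F_-(y_t))/(F(y_t) - F_-(y_t)) = 1$; otherwise $v_t = 1$ by definition. Substituting, $Z(G, y_t, 1) = G(y_t) = t = F(y_t)$. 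Since $F$ is constant on $[y, y_t]$, I then need $G(y) = F(y)$, not merely $G(y_t) = F(y_t)$. This requires ruling out $G$ increasing on $[y, y_t)$ while $F$ is flat there.

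For that, take $t' \in (F_-(y), F(y))$ when $F$ jumps at $y$, or take $t' \uparrow t$. The cleaner route is to show that $G = F$ on the set $R = \{y_t : t \in (0,1)\}$. Since $G(y_t) = t$ whenever $F(y_t) = t$, the points of $R$ are the right endpoints of the flat stretches of $F$, together with its jump points. I would use $\sup_t F(y_t) = 1$ and $\inf_t = 0$, together with the monotonicity of $G$, to pin down $G$ everywhere. The step I expect to be the main obstacle is the flat-interval issue: if $F$ is constant on $[a, b)$, it is not obvious that $G$ cannot increase there. To handle it, I would also use values of $t$ at which $y_t$ is a jump point of $F$ and $v_t \in (0,1)$. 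These give $(1 - v_t) G_-(y_t) + v_t G(y_t) = t$ for a whole interval of $t$, which forces $G_-(y_t) = F_-(y_t)$ and $G(y_t) = F(y_t)$ by comparing the affine functions of $v_t$. The remaining flat-interval case I would settle by showing that $\int\!\int \mathbbm{1}(Z(F,y,v) \le t)\,dv\,dG(y)$ being exactly uniform forbids $G$-mass in the interior of any $F$-flat interval. Such mass would give the variable $Z(F, Y, V)$ an atom at the constant value of $F$ on that interval, contradicting the continuity of $t \mapsto t$.

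Finally, $F$ and $G$ agree at all points where $F \in (0,1)$ and are not in the interior of a flat interval. Right-continuity and monotonicity extend this to all of $\mathbb{R}$, including the regions where $F \in \{0, 1\}$, by the same no-mass argument.
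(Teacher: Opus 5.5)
Your forward direction is exactly the paper's argument. The paper proves only that direction, via Proposition~\ref{distribution_of_Z}, and defers the converse to Modeste's Lemma~3.2.

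Your converse, however, rests on a false step. For $t=F(y)\in(0,1)$ you assert $F(y_t)=t$ and $v_t=1$, justified by ``right-continuity gives $F(y_t)\le t$''. Right-continuity actually gives $F(y_t)\ge t$, and the true statement is only $F_-(y_t)\le t\le F(y_t)$. Whenever the level set $\{F=t\}$ ends in a jump, $F(y_t)>t$ and $v_t=0$. Concretely, let $F=0$ on $(-\infty,0)$, $F(y)=y$ on $[0,\frac14)$, $F=\frac14$ on $[\frac14,\frac12)$ and $F=1$ on $[\frac12,\infty)$, and take $y=t=\frac14$. Then $y_t=\frac12$, $F(y_t)=1$, $F_-(y_t)=\frac14$ and $v_t=0$. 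So the hypothesis gives $G_-(\frac12)=\frac14$, not $G(\frac12)=\frac14$; the latter is false when $G=F$. For the same reason $F$ is constant only on $[y,y_t)$, not on $[y,y_t]$. Beyond this, the rest of your converse is a plan (``I would use\ldots'', ``I would settle\ldots'') rather than an argument. The closing appeal to ``right-continuity and monotonicity'' is exactly the part that needs proof.

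Your approach can be repaired as follows.
\begin{itemize}
\item For each $t\in(0,1)$, split on continuity at $y_t$. If $F$ is continuous at $y_t$, then $F(y_t)=t$, $v_t=1$, and the hypothesis gives $G(y_t)=F(y_t)$.
\item If $F$ jumps at $y_t$, then $y_s=y_t$ for every $s\in(F_-(y_t),F(y_t))$. Your affine-in-$v$ comparison then gives $G_-(y_t)=F_-(y_t)$ and $G(y_t)=F(y_t)$.
\item Now fix $y$ and put $c=F(y)$. For the lower bound, take $s\in(0,c)$: then $y_s\le y$, so $G(y)\ge G(y_s)=F(y_s)\ge s$, whence $G(y)\ge c$.
\item For the upper bound, the cases $y=y_c$ (already covered) and $c=1$ are trivial. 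If $c\in(0,1)$ and $y<y_c$, then $G(y)\le G_-(y_c)\le c$. Here $G_-(y_c)=F_-(y_c)=c$ at a jump, and $G_-(y_c)\le G(y_c)=c$ otherwise.
\item If $c=0$, then $y<y_s$ for every $s\in(0,1)$, and the same bound gives $G(y)\le G_-(y_s)\le s$.
\end{itemize}
So your no-mass argument is not even needed.

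There is, however, a far shorter route. Take $Y$ with distribution function $G$ and $V\sim\mathcal{U}([0,1])$ independent of it. The hypothesis then says that $Z=Z(F,Y,V)$ satisfies $\mathbb{P}(Z<t)=\mathbb{P}(Z\le t)=t$ for all $t\in[0,1]$. Since $F_-(Y)\le Z\le F(Y)$, and $F_-(Y)\ge F(y)$ when $Y>y$, one has $\{Z<F(y)\}\subseteq\{Y\le y\}\subseteq\{Z\le F(y)\}$. This gives $G(y)=F(y)$ in one line.
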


For a proof of the backward implication see Lemma 3.2 of \cite{Mds23}. (In using Proposition \ref{distribution_of_Z} to prove the forward implication we have taken the approach outlined by Ferguson. Brockwell gives an alternative proof in Lemma 2.1 of \cite{Brc07}.)

The key insight underpinning the PIT is that $Z(F, Y, V)$ is uniform not only if $F$ is the CDF of $Y$, but also if $F$ is a probability forecast for $Y$ which is suitably calibrated. If $F$ is a forecast for $Y$, we define the PIT of $F$ to be $Z(F, Y, V)$, where $V$ is an arbitrary $\mathcal{U}([0,1])$ random variable independent of $Y$ (and independent of all other variables being considered). If $Z(F, Y, V) \sim \mathcal{U}([0,1])$ then $F$ is said to be probabilistically calibrated (see e.g. Definition 2.6(b) of \cite{Gnt13}).

The forward implication of the following proposition says that if $F$ is a forecast for $Y$ which is ideally calibrated with respect to $\Psi$, then $F$ is probabilistically calibrated, and in addition the PIT value is independent of $\Psi$.

\begin{proposition}\label{auto-calibration_PIT}
  Let $F$ be a distribution function, $\Psi \subset \mathcal{F}$ be a $\sigma$-algebra and $Y$ be an $\mathbb{R}$-valued random variable. Let $V$ be a random variable with distribution $\mathcal{U}([0,1])$ and independent of $\sigma(\Psi, Y)$.

  Then the following are equivalent:
\begin{enumerate}[label=(\roman*)]
  \item $F = F_{Y|\Psi}$;
  \item $F$ is $\Psi$-measurable, and $Z(F,Y,V)$ has distribution $\mathcal{U}([0,1])$ and is independent of $\Psi$.
\end{enumerate}
\end{proposition}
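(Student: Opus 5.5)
The plan is to condition on $\Psi$ and reduce both directions to Lemma \ref{PIT_is_uniform} and Proposition \ref{distribution_of_Z}. Everything rests on the following computation. Let $F$ be $\Psi$-measurable and let $G = F_{Y|\Psi}$. Since $V$ is independent of $\sigma(\Psi, Y)$ and uniform, a disintegration argument gives, for each $t \in (0,1)$,
\begin{equation*}
  \mathbb{P}(Z(F,Y,V) \leq t \mid \Psi) = \int_\mathbb{R} \int_0^1 \mathbbm{1}(Z(F,y,v) \leq t) \,dv \,dG(y) \quad \text{a.s.}
\end{equation*}
This is the standard property of regular conditional distributions. Indeed, $(Y,V)$ given $\Psi$ has regular conditional distribution $G \otimes \mathcal{U}([0,1])$, because $V$ is independent of $\sigma(\Psi,Y)$. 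Moreover, $(\omega,y,v) \mapsto \mathbbm{1}(Z(F(\omega),y,v)\le t)$ is jointly measurable, because $Z$ is measurable and $F$ is $\Psi$-measurable. We can therefore apply the disintegration theorem (e.g.\ Theorem 8.5 of \cite{Kll21}) with the $\Psi$-measurable argument frozen.

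For (i) $\Rightarrow$ (ii), $F = F_{Y|\Psi}$ is $\Psi$-measurable by definition. By the display with $G = F$ and the forward direction of Lemma \ref{PIT_is_uniform}, applied pointwise in $\omega$, we get $\mathbb{P}(Z(F,Y,V)\le t\mid\Psi) = t$ a.s.\ for every $t\in(0,1)$. Taking expectations shows that $Z(F,Y,V)\sim\mathcal{U}([0,1])$. The conditional distribution function is non-random and equals the marginal one, and since the sets $\{Z\le t\}$ form a $\pi$-system generating $\sigma(Z)$, it follows that $Z(F,Y,V)$ is independent of $\Psi$.

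For (ii) $\Rightarrow$ (i), independence and uniformity give $\mathbb{P}(Z(F,Y,V)\le t\mid\Psi) = t$ a.s.\ for each $t$. Using the countable set of rational $t \in (0,1)$, we find a single null set off which the displayed double integral, with $G = F_{Y|\Psi}(\omega)$, equals $t$ for all rational $t$. We then need to extend this to all $t\in(0,1)$. By Proposition \ref{distribution_of_Z} the left side equals $Z(G,y_t,v_t)$, and as a function of $t$ it is the distribution function of a $[0,1]$-valued quantity, hence right-continuous in $t$. Agreement with $t$ on the rationals therefore gives agreement for all $t$. The backward direction of Lemma \ref{PIT_is_uniform} then yields $F(\omega) = F_{Y|\Psi}(\omega)$ off the null set, which is (i).

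The main obstacle I expect is the measure-theoretic justification of the conditional-probability display, namely making the freezing of the $\Psi$-measurable $F$ rigorous with joint measurability of $(F,y,v)\mapsto Z(F,y,v)$ on the space of distribution functions. The paper asserts measurability of $Z$ only in $F$ for fixed $(y,v)$. I would first establish joint measurability by writing $F_-(y)$ as $\lim_n F(y-1/n)$ and approximating $y$ by dyadic grids, using right-continuity of $F$ to pass to the limit. The null-set bookkeeping in the reverse direction is the other delicate point, which the rational-$t$ plus right-continuity argument is meant to handle.
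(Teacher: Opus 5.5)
Your proposal is correct and follows the paper's proof essentially step for step: disintegrate $(Y,V)$ given $\Psi$ as $\phi^{-1}(F_{Y|\Psi})\otimes\lambda$ to identify $\mathbb{P}(Z(F,Y,V)\le t\mid\Psi)$ with the double integral, apply Lemma \ref{PIT_is_uniform} in each direction, and handle the null sets in (ii)$\Rightarrow$(i) using rational $t$ plus right-continuity in $t$. Your explicit treatment of the joint measurability of $Z$ and your $\pi$-system argument for independence only fill in details that the paper leaves implicit.
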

We give a proof in the \hyperref[appn]{Appendix}, based on the Disintegration Theorem (Theorem \ref{Disintegration}). (Gneiting and Ranjan give essentially the same proof of $\text{(i)} \Rightarrow \text{(ii)}$ in Theorem 2.8 of \cite{Gnt13}, although they do not show the independence of the PIT value from $\Psi$. Modeste gives a proof of the equivalence similar to ours in Proposition 3.4 of \cite{Mds23} but requires the underlying probability space $(\Omega, \mathcal{F}, \mathbb{P})$ to be Polish.)

We now consider a rolling-event sequence of probability forecasts $(F_1, \ldots, F_n)$ made for real-valued observations $(Y_1, \ldots, Y_n)$. Let $V_1, \ldots, V_n$ be independent and identitically distributed (abbreviated to i.i.d.) $\mathcal{U}([0,1])$ and independent of $\sigma(F_1, \ldots, F_n, Y_1, \ldots, Y_n)$; then for each $i \in \{1, \ldots, n\}$, the PIT value for $F_i$ is $Z_i = Z(F_i, Y_i, V_i)$. If $F_i$ is auto-calibrated then it is ideally calibrated with respect to $\sigma(F_i)$, so it is probabilistically calibrated by Proposition \ref{auto-calibration_PIT}. Thus if each individual forecast is auto-calibrated then the PIT values $Z_1, \ldots, Z_n$ all have distribution $\mathcal{U}([0,1])$, but they need not be independent. We will show in Proposition \ref{PIT_values} that the PIT values are i.i.d. $\mathcal{U}([0,1])$ under the following condition on the forecast sequence.
\begin{definition}\label{rolling_auto-calibration}
  Let $\mu_1$, \ldots, $\mu_n$ be random probability measures on a Borel space $(S,\mathcal{S})$ and let $Y_1$, \ldots, $Y_n$ be random elements in $(S,\mathcal{S})$. Then $(\mu_1, \ldots, \mu_n)$ is auto-calibrated as a rolling-event probability forecast sequence for observations $(Y_1, \ldots, Y_n)$ if, for each $i \in \{1,\ldots,n\}$,
  \begin{equation*}
    \mu_i = \mathcal{L}(Y_i|\mu_1, Y_1, \ldots, \mu_{i-1}, Y_{i-1}, \mu_i).
  \end{equation*}
\end{definition}
We extend Definition \ref{rolling_auto-calibration} to a sequence $(F_1, \ldots, F_n)$ of random distribution functions in place of $(\mu_1, \ldots, \mu_n)$ in the obvious way.

\begin{remark}
If a rolling-event forecast sequence $(\mu_1, \ldots, \mu_n)$ for observations $(Y_1, \ldots, Y_n)$ satisfies the condition in Definition \ref{rolling_auto-calibration}, we can just say it is auto-calibrated. The reason for defining the condition for general sequences of random probability measures is we will need it in cases where $\mu_1, \ldots, \mu_n$ are not forecasts, or are forecasts that were originally made for observations other than $Y_1, \ldots, Y_n$.
\end{remark}

\begin{remark}
In Definition 3.5 of \cite{Mds23}, Modeste gives the name `auto-calibration' to a different property of a rolling-event probability forecast sequence which does not imply the PIT values are i.i.d. $\mathcal{U}([0,1])$.
\end{remark}

\begin{remark}
Using Tsyplakov's definition of conditional auto-calibration again, we can say that a rolling-event sequence of probability forecasts $(\mu_1, \ldots, \mu_n)$ for observations $(Y_1, \ldots, Y_n)$ is auto-calibrated if and only if each $\mu_i$ is conditionally auto-calibrated given $\sigma(\mu_1$, $Y_1$, \ldots, $\mu_{i-1}$, $Y_{i-1})$, the history of the forecast and observation sequences up to $\mu_i$.
\end{remark}

\begin{proposition}\label{PIT_values}
  Let $F_1$, \ldots, $F_n$ be random distribution functions and $Y_1$, \ldots, $Y_n$ be $\mathbb{R}$-valued random variables. Let $V_1$, \ldots, $V_n$ be i.i.d. $\mathcal{U}([0,1])$ random variables independent of $\sigma(F_1, \ldots, F_n, Y_1, \ldots, Y_n)$.

  For each $i \in \{1, \ldots, n\}$, let $Z_i = Z(F_i, Y_i, V_i)$. Then the following are equivalent:
\begin{enumerate}[label=(\roman*)]
  \item $(F_1, \ldots, F_n)$ is auto-calibrated as a rolling-event probability forecast sequence for observations $(Y_1, \ldots, Y_n)$;
  \item for each $i \in \{1, \ldots, n\}$, $Z_i$ has distribution $\mathcal{U}([0,1])$ and is independent of 
\begin{equation*}
  \sigma(F_1, \ldots, F_i, Y_1, \ldots, Y_{i-1}).
\end{equation*}
\end{enumerate}

  Consequently, {\normalfont (i)} implies $Z_1$, \ldots, $Z_n$ are i.i.d. $\mathcal{U}([0,1])$.
\end{proposition}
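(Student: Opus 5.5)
The plan is to apply Proposition \ref{auto-calibration_PIT} once for each $i$. The $\sigma$-algebra will be $\Psi_i = \sigma(F_1, \ldots, F_i, Y_1, \ldots, Y_{i-1}, V_1, \ldots, V_{i-1})$, together with the forecast $F = F_i$, the observation $Y = Y_i$ and the auxiliary variable $V = V_i$.

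First I check the hypotheses. $V_i$ is $\mathcal{U}([0,1])$, and it is independent of $\sigma(\Psi_i, Y_i)$, since the $V_j$ are i.i.d.\ and jointly independent of $\sigma(F_1,\ldots,F_n,Y_1,\ldots,Y_n)$. Also $F_i$ is $\Psi_i$-measurable. So Proposition \ref{auto-calibration_PIT} gives: $F_i = F_{Y_i|\Psi_i}$ if and only if $Z_i \sim \mathcal{U}([0,1])$ and $Z_i$ is independent of $\Psi_i$.

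Two reductions connect this to the statement.

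\emph{Step (a).} Let $\mathcal{H}_i = \sigma(F_1, \ldots, F_i, Y_1, \ldots, Y_{i-1})$. Then $F_{Y_i|\Psi_i} = F_{Y_i|\mathcal{H}_i}$ almost surely. The reason is that $(V_1,\ldots,V_{i-1})$ is independent of $\sigma(\mathcal{H}_i, Y_i)$, and adjoining an independent $\sigma$-algebra does not change the conditional law. For each $y$, a $\pi$-$\lambda$ argument on sets $H \cap B$ with $H \in \mathcal{H}_i$ and $B \in \sigma(V_1,\ldots,V_{i-1})$ gives $\mathbb{P}(Y_i \le y|\Psi_i) = \mathbb{P}(Y_i \le y|\mathcal{H}_i)$. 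Hence condition (i) of Proposition \ref{PIT_values} holds if and only if $F_i = F_{Y_i|\Psi_i}$ for every $i$.

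\emph{Step (b).} The condition in (ii), that $Z_i$ is independent of $\mathcal{H}_i$, is weaker than independence of $\Psi_i$, so this step is the main obstacle. For (i)$\Rightarrow$(ii) there is nothing to do: independence from $\Psi_i \supset \mathcal{H}_i$ is stronger than needed. For (ii)$\Rightarrow$(i), I would avoid the enlarged $\sigma$-algebra and apply the equivalence directly with $\mathcal{H}_i$ in place of $\Psi_i$. This is legitimate because $V_i$ is independent of $\sigma(\mathcal{H}_i, Y_i)$ and $F_i$ is $\mathcal{H}_i$-measurable. Proposition \ref{auto-calibration_PIT} then gives: $F_i = F_{Y_i|\mathcal{H}_i}$ if and only if $Z_i \sim \mathcal{U}([0,1])$ and $Z_i$ is independent of $\mathcal{H}_i$. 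Since this holds for each $i$ separately, the equivalence (i)$\Leftrightarrow$(ii) follows, and Step (a) together with $\Psi_i$ is only needed for the consequence below.

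For the consequence, assume (i). By Step (a) and Proposition \ref{auto-calibration_PIT} applied with $\Psi_i$, each $Z_i$ is $\mathcal{U}([0,1])$ and independent of $\Psi_i$. Now $Z_1, \ldots, Z_{i-1}$ are functions of $(F_j, Y_j, V_j)_{j<i}$, so they are $\Psi_i$-measurable. Hence $Z_i$ is independent of $\sigma(Z_1,\ldots,Z_{i-1})$ for each $i$. Induction on $i$ then shows that the joint law of $(Z_1,\ldots,Z_i)$ is the product of the marginals, so $Z_1, \ldots, Z_n$ are i.i.d.\ $\mathcal{U}([0,1])$. The step needing the most care is the independence-adjunction argument in Step (a), which is what lets the earlier $V_j$ enter the conditioning.
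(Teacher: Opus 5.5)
Your proof is correct, and for the equivalence it is exactly the paper's argument: Proposition \ref{auto-calibration_PIT} is applied separately for each $i$ with the $\sigma$-algebra $\sigma(F_1,\ldots,F_i,Y_1,\ldots,Y_{i-1})$, using that $F_i$ is measurable with respect to it. For the consequence, the paper only writes that by (ii) $Z_i$ is independent of $\sigma(Z_1,\ldots,Z_{i-1})$, which passes over the fact that $Z_1,\ldots,Z_{i-1}$ depend on $V_1,\ldots,V_{i-1}$; your enlargement to $\Psi_i$ via Step (a) supplies the missing justification (a shorter route is to note that $\sigma(V_1,\ldots,V_{i-1})$ is independent of $\sigma(F_1,\ldots,F_n,Y_1,\ldots,Y_n,V_i)$, which contains both $Z_i$ and $\sigma(F_1,\ldots,F_i,Y_1,\ldots,Y_{i-1})$).
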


\begin{proof}
  $\text{(i)} \Rightarrow \text{(ii)}$:\, For $i \in \{1, \ldots, n\}$, let $\mu_i = \phi^{-1}(F_i)$.

  Fix $i \in \{1, \ldots, n\}$. Since $(\mu_1, \ldots, \mu_n)$ is auto-calibrated as a rolling-event probability forecast sequence for observations $(Y_1, \ldots, Y_n)$, we have
\begin{equation*}
  \mu_i = \mathcal{L}(Y_i|\mu_1, Y_1, \ldots, \mu_{i-1}, Y_{i-1}, \mu_i).
\end{equation*}

  Then by Proposition \ref{auto-calibration_PIT}, $Z_i$ has distribution $\mathcal{U}([0,1])$ and is independent of
\begin{equation*}
  \sigma(\mu_1, Y_1, \ldots, \mu_{i-1}, Y_{i-1}, \mu_i),
\end{equation*}
  as required.

  $\text{(ii)} \Rightarrow \text{(i)}$:\, Fix $i \in \{1, \ldots, n\}$. $F_i$ is clearly $\sigma(F_1, Y_1, \ldots, F_{i-1}, Y_{i-1}, F_i)$-measurable, so by Proposition \ref{auto-calibration_PIT},
\begin{equation*}
  F_i = F_{Y_i|\sigma(F_1, Y_1, \ldots, F_{i-1}, Y_{i-1}, F_i)},
\end{equation*}
as required.

  $\text{Consequence of (i)}$:\, For $i \in \{1, \ldots, n\}$, by (ii), $Z_i$ has distribution $\mathcal{U}([0,1])$ and is independent of $\sigma(Z_1, \ldots, Z_{i-1})$.
\end{proof}

\begin{remark}
In Corollory 3.6 of \cite{Mds23}, Modeste shows that the PIT values are i.i.d. $\mathcal{U}([0,1])$ if each forecast $F_i$ is ideally calibrated with respect to the $\sigma$-algebra $\Psi_i$, where $\Psi_1 \subset \ldots \subset \Psi_n$ and $\sigma(Y_1, \ldots, Y_{i-1}) \subset \Psi_i$. Our definition of auto-calibration for a forecast sequence is equivalent to there existing $\sigma$-algebras $\Psi_1$, \ldots, $\Psi_n$ such that those conditions are satisfied.
\end{remark}

There are many documented approaches to testing that a sequence of numbers came from an i.i.d. $\mathcal{U}([0,1])$ sequence of random variables, including the Kolmogorov--Smirnov test, the Cram\'er--von Mises test and Neyman's smooth test. Any of these can be applied to a sequence of PIT values to test the auto-calibration of the forecast sequence.

We now summarise an approach described by Kn\"uppel, Kr\"uger and Pohle in \cite{Knp23}, using the PIT to test the calibration of a rolling-event forecast sequence where the forecasts are probability distributions on a space other than $\mathbb{R}$. The idea is to transform the observation $Y_i$ into the $\mathbb{R}$-valued random variable $g(\mu_i, Y_i)$, where $\mu_i$ is the forecast for $Y_i$ and $g$ is a proper scoring rule (see e.g. \cite{GRf07}). We then also transform the forecast into a random distribution function $\Gamma_g(\mu_i)$ as follows.

\begin{definition}
  Let $(S,\mathcal{S})$. Let $g(\mu,y)$ be a measurable function of a probability measure $\mu$ on $(S,\mathcal{S})$ and $y \in S$, taking values in $\mathbb{R}$. The function $\Gamma_g$ takes a probability measure $\mu$ on $(S,\mathcal{S})$ to the distribution function given by
\begin{equation*}
\Gamma_g(\mu) = \phi((y \mapsto g(\mu,y))_*(\mu)),
\end{equation*}
where $f_*(\mu)$ denotes the push-forward measure.
\end{definition}

$\Gamma_g$ is measurable by Lemma 3.2(ii) of \cite{Kll21}. We can then see that auto-calibration is preserved for the transformed sequences of forecasts and observations.

\begin{proposition}\label{transformation}
  Let $\mu_1$, \ldots, $\mu_n$ be random probability measures on a Borel space $(S,\mathcal{S})$, and let $Y_1$, \ldots, $Y_n$ be random elements in $(S,\mathcal{S})$, such that $(\mu_1, \ldots, \mu_n)$ is auto-calibrated as a rolling-event probability forecast sequence for observations $(Y_1, \ldots, Y_n)$. Let $g(\mu,y)$ be a measurable $\mathbb{R}$-valued function of a probability measure $\mu$ on $(S,\mathcal{S})$ and of $y \in S$.

  Then $(\Gamma_g(\mu_1), \ldots, \Gamma_g(\mu_n))$ is auto-calibrated as a rolling-event probability forecast sequence for observations $(g(\mu_1,Y_1), \ldots, g(\mu_n,Y_n))$.
\end{proposition}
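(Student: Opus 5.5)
We need to show that for each $i$, writing $G_j = \Gamma_g(\mu_j)$ and $X_j = g(\mu_j, Y_j)$,
\begin{equation*}
  G_i = F_{X_i|\sigma(G_1, X_1, \ldots, G_{i-1}, X_{i-1}, G_i)}.
\end{equation*}
The plan is to first compute the conditional law of $X_i$ given the larger $\sigma$-algebra $\mathcal{H}_i = \sigma(\mu_1, Y_1, \ldots, \mu_{i-1}, Y_{i-1}, \mu_i)$. We then descend to the smaller $\sigma$-algebra $\mathcal{G}_i = \sigma(G_1, X_1, \ldots, G_{i-1}, X_{i-1}, G_i)$ by the Tower Law.

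First, check that $\mathcal{G}_i \subset \mathcal{H}_i$. Each $G_j = \Gamma_g(\mu_j)$ is a measurable function of $\mu_j$, since $\Gamma_g$ is measurable. Each $X_j = g(\mu_j, Y_j)$ is a measurable function of $(\mu_j, Y_j)$, since $g$ is jointly measurable. Hence all generators of $\mathcal{G}_i$ are $\mathcal{H}_i$-measurable.

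Second, compute $\mathbb{P}(X_i \le x \mid \mathcal{H}_i)$ for fixed $x \in \mathbb{R}$. By hypothesis $\mu_i = \mathcal{L}(Y_i|\mathcal{H}_i)$, and $\mu_i$ is $\mathcal{H}_i$-measurable. The Disintegration Theorem (Theorem \ref{Disintegration}), applied to the $\mathcal{H}_i$-measurable element $\mu_i$ and the random element $Y_i$ with conditional law $\mu_i$, gives for the bounded measurable function $(\nu, y) \mapsto \mathbbm{1}(g(\nu, y) \le x)$
\begin{equation*}
  \mathbb{P}(g(\mu_i, Y_i) \le x \mid \mathcal{H}_i) = \int \mathbbm{1}(g(\mu_i, y) \le x)\, \mu_i(dy) = \Gamma_g(\mu_i)(x) = G_i(x)
\end{equation*}
almost surely. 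The middle equality is just the definition of the push-forward measure together with the map $\phi$.

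Third, since $G_i$ is $\mathcal{G}_i$-measurable and $\mathcal{G}_i \subset \mathcal{H}_i$, the Tower Law yields
\begin{equation*}
  \mathbb{P}(X_i \le x \mid \mathcal{G}_i) = \mathbb{E}[G_i(x) \mid \mathcal{G}_i] = G_i(x)
\end{equation*}
almost surely, for every $x$. Because $G_i$ is a random distribution function, it follows that $G_i = F_{X_i|\mathcal{G}_i}$. Essential uniqueness holds since a distribution function is determined by its values at rationals and by right-continuity.

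The main obstacle is the second step. There the integrand depends on the random measure $\mu_i$ as well as on $y$, so we need the disintegration form of conditioning rather than a plain substitution. This requires joint measurability of $(\nu, y) \mapsto g(\nu, y)$, which is assumed, and the measurability of $\Gamma_g$, which was already noted via Lemma 3.2(ii) of \cite{Kll21}. If Theorem \ref{Disintegration} is stated only for a fixed $\sigma$-algebra and an $\mathcal{H}_i$-measurable auxiliary variable, it applies directly with auxiliary variable $\mu_i$.
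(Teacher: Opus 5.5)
Your proposal is correct and follows the paper's proof. Both compute $\mathbb{P}(g(\mu_i,Y_i)\in\cdot \mid \mu_1,Y_1,\ldots,\mu_{i-1},Y_{i-1},\mu_i)$ via Theorem \ref{Disintegration} and the push-forward identity, obtaining $\phi^{-1}(\Gamma_g(\mu_i))$, and then pass to the smaller $\sigma$-algebra generated by the transformed forecasts and observations. You spell out two steps that the paper compresses into its final ``Therefore'': the inclusion $\mathcal{G}_i\subset\mathcal{H}_i$ and the Tower Law argument. You also work with half-lines $(-\infty,x]$ rather than arbitrary Borel sets, which is enough to identify a random distribution function.
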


We give a proof in the \hyperref[appn]{Appendix}.

Since the transformed observations are real-valued, and the transformed forecasts are random distribution functions, we can calculate the corresponding sequence of PIT values. We can then test whether the original forecast sequence was auto-calibrated by testing whether these PIT values are i.i.d. $\mathcal{U}([0,1])$.

\section{Synthetic PIT values}\label{syn_PITs}
In this section, we derive a testable consequence of auto-calibration for a fixed-event sequence of probability forecasts, initially requiring the observation to be real-valued and then allowing it to take values in any Borel space. Our approach is to use `synthetic observations' to obtain an auto-calibrated rolling-event forecast sequence, and then to use the Probability Integral Transform, which was reviewed in section \ref{rolling_event}. We call the resulting variables `synthetic PIT values'.

We shall need the following form of the powerful Disintegration Theorem---see Theorem 8.5(ii) of \cite{Kll21}.
\begin{theorem}\label{Disintegration}
  Let $\eta$ be a random element in a Borel space $(T,\mathcal{T})$ and let $\Psi \subset \mathcal{F}$ be a $\sigma$-algebra. Let $\mu$ be a version of $\mathcal{L}(\eta|\Psi)$ and let $H : \Omega \times T \to \mathbb{R}_{\geq 0}$ be $(\Psi \otimes \mathcal{T})/\mathcal{B}(\mathbb{R}_{\geq 0})$-measurable. Then
  \begin{equation*}
    \mathbb{E}[H(\eta)|\Psi] = \int H(t) \,\mu(dt).
  \end{equation*}
\end{theorem}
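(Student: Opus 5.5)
The plan is a standard monotone-class argument: verify the identity for indicators of measurable rectangles, extend to all of $\Psi \otimes \mathcal{T}$ by a Dynkin ($\pi$--$\lambda$) argument, and then pass to nonnegative measurable $H$ by linearity and monotone convergence. Throughout, $H(\eta)$ means the random variable $\omega \mapsto H(\omega, \eta(\omega))$. It is $\mathcal{F}$-measurable because it is the composition of $H$ with $\omega \mapsto (\omega, \eta(\omega))$, and that map is $\mathcal{F}/(\Psi \otimes \mathcal{T})$-measurable: preimages of rectangles $B \times A$ are $B \cap \{\eta \in A\} \in \mathcal{F}$. Since $H \geq 0$ need not be integrable, I will interpret $\mathbb{E}[\,\cdot\,|\Psi]$ for nonnegative variables in the extended sense, as the monotone limit of conditional expectations of truncations. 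I will use the conditional monotone convergence theorem freely in this sense. The Borel hypothesis on $(T,\mathcal{T})$ is needed only to guarantee that the version $\mu$ of $\mathcal{L}(\eta|\Psi)$ exists; it plays no further role.

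\textbf{Step 1 (rectangles).} Take $H = \mathbbm{1}_{B \times A}$ with $B \in \Psi$ and $A \in \mathcal{T}$. The right-hand side is $\mathbbm{1}_B \, \mu(A)$, which is $\Psi$-measurable. The left-hand side is $\mathbb{E}[\mathbbm{1}_B \mathbbm{1}(\eta \in A)|\Psi]$. Pulling out the known factor gives $\mathbbm{1}_B \, \mathbb{P}(\eta \in A|\Psi) = \mathbbm{1}_B \, \mu(A)$ almost surely, because $\mu$ is a version of $\mathcal{L}(\eta|\Psi)$.

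\textbf{Step 2 (all sets in $\Psi \otimes \mathcal{T}$).} Let $\mathcal{D}$ be the class of $C \in \Psi \otimes \mathcal{T}$ for which two things hold: $\omega \mapsto \int \mathbbm{1}_C(\omega,t)\,\mu(\omega,dt) = \mu(\omega, C_\omega)$ is $\Psi$-measurable, and the identity holds for $H = \mathbbm{1}_C$. By Step 1, $\mathcal{D}$ contains the $\pi$-system of rectangles, which generates $\Psi \otimes \mathcal{T}$ and contains $\Omega \times T$. For proper differences $C \subset C'$, both measurability and the identity follow by subtraction, since everything is bounded by $1$. For increasing unions, they follow by monotone convergence, applied both to the kernel integrals pointwise in $\omega$ and to the conditional expectations. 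So $\mathcal{D}$ is a Dynkin system, and the $\pi$--$\lambda$ theorem gives $\mathcal{D} = \Psi \otimes \mathcal{T}$.

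\textbf{Step 3 (general $H$).} Linearity extends the identity, together with $\Psi$-measurability of the right-hand side, to nonnegative simple $(\Psi\otimes\mathcal{T})$-measurable functions. For a general nonnegative measurable $H$, choose simple $H_k \uparrow H$. Then $H_k(\eta) \uparrow H(\eta)$, and $\int H_k \,d\mu \uparrow \int H \,d\mu$ pointwise by ordinary monotone convergence for each fixed $\omega$. The conditional monotone convergence theorem then gives $\mathbb{E}[H(\eta)|\Psi] = \int H(t)\,\mu(dt)$ almost surely, allowing the value $+\infty$.

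I expect the main obstacle to be Step 2. The point is not the identity itself but carrying the $\Psi$-measurability of $\omega \mapsto \mu(\omega, C_\omega)$ through the Dynkin argument. This is exactly where joint measurability of $H$ in $(\omega,t)$, rather than measurability in $t$ alone, is essential. A secondary nuisance is the bookkeeping for non-integrable $H$, which I handle by working throughout with the extended nonnegative conditional expectation.
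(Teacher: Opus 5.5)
Your proposal is correct and follows essentially the standard route. The paper gives no proof of its own but cites Theorem 8.5(ii) of Kallenberg, whose proof likewise goes from measurable rectangles, via a monotone-class argument, to indicators of sets in the product $\sigma$-algebra, and then by linearity and monotone convergence to general nonnegative integrands. The paper's formulation is the special case in which the $\Psi$-measurable element is the identity map into $(\Omega,\Psi)$, and your direct treatment of $H(\omega,\eta(\omega))$ handles exactly that case, including the $\Psi$-measurability of $\omega \mapsto \mu(\omega, C_\omega)$ through the Dynkin step.
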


Our method will in fact allow us to test the martingale property for finite sequences of random probability measures; we will then use Proposition \ref{auto-calibration_equivalence} to apply it to auto-calibrated fixed-event probability forecast sequences as a special case. The following proposition allows us to rewrite the martingale property using some auxiliary random variables.
\begin{proposition}\label{synthetic_observations}
Let $\mu_1, \ldots, \mu_{n+1}$ be random probability measures on a Borel space $(S,\mathcal{S})$. Let $X_2$, \ldots, $X_{n+1}$ be random elements in $(S,\mathcal{S})$ such that
\begin{equation*}
  \mathcal{L}((X_2, \ldots, X_{n+1})|\mu_1, \ldots, \mu_{n+1}) = \mu_2 \otimes \ldots \otimes \mu_{n+1}.
\end{equation*}

  Then $(\mu_1, \ldots, \mu_{n+1})$ is a martingale if and only if $(\mu_1, \ldots, \mu_n)$ is auto-calibrated as a rolling-event probability forecast sequence for observations $(X_2, \ldots, X_{n+1})$.
\end{proposition}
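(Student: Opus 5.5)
The plan is to reduce both sides of the equivalence to the same family of identities: for each $i \in \{1,\ldots,n\}$ and each $A \in \mathcal{S}$,
\begin{equation*}
  \mathbb{E}[\mu_{i+1}(A) \mid \mathcal{G}_i] = \mathbb{E}[\mu_{i+1}(A) \mid \mu_1,\ldots,\mu_i],
  \qquad
  \mathcal{G}_i = \sigma(\mu_1, X_2, \ldots, \mu_{i-1}, X_i, \mu_i).
\end{equation*}
Rolling-event auto-calibration says $\mu_i = \mathcal{L}(X_{i+1}\mid\mathcal{G}_i)$, i.e. $\mu_i(A) = \mathbb{P}(X_{i+1}\in A\mid\mathcal{G}_i)$ for all $A$, because $\mu_i$ is $\mathcal{G}_i$-measurable. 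The martingale property says $\mu_i(A) = \mathbb{E}[\mu_{i+1}(A)\mid\mu_1,\ldots,\mu_i]$ for all $A$. So it suffices to prove, for every $i$ and $A$, the two identities
\begin{equation*}
  \mathbb{P}(X_{i+1}\in A\mid \mathcal{G}_i) = \mathbb{E}[\mu_{i+1}(A)\mid \mathcal{G}_i]
  \quad\text{and}\quad
  \mathbb{E}[\mu_{i+1}(A)\mid \mathcal{G}_i] = \mathbb{E}[\mu_{i+1}(A)\mid \mu_1,\ldots,\mu_i].
\end{equation*}
Both directions of the proposition then follow at once. Write $\mathcal{M} = \sigma(\mu_1,\ldots,\mu_{n+1})$.

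\textbf{Step 1 (first identity).} I would show that $\mathcal{L}(X_{i+1}\mid \mathcal{M}, X_2,\ldots,X_i) = \mu_{i+1}$. Fix $A$ and a bounded $H$ on $S^{i-1}$. Apply Theorem \ref{Disintegration} with $\Psi = \mathcal{M}$ and the conditional law $\mu_2\otimes\cdots\otimes\mu_{n+1}$, then integrate out coordinates by Fubini:
\begin{equation*}
  \mathbb{E}[\mathbbm{1}(X_{i+1}\in A)\, H(X_2,\ldots,X_i)\mid\mathcal{M}]
  = \mu_{i+1}(A)\int H \, d(\mu_2\otimes\cdots\otimes\mu_i)
  = \mathbb{E}[\mu_{i+1}(A)\, H(X_2,\ldots,X_i)\mid\mathcal{M}].
\end{equation*}
The second equality is the same disintegration applied to $\mu_{i+1}(A) H$, using that $\mu_{i+1}(A)$ is $\mathcal{M}$-measurable. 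Multiplying by an indicator of an event in $\mathcal{M}$ and taking expectations, a $\pi$-$\lambda$ argument gives the claim. Since $\mathcal{G}_i \subset \sigma(\mathcal{M}, X_2,\ldots,X_i)$, the tower law then yields the first identity.

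\textbf{Step 2 (second identity).} This is the main obstacle, since $\mathcal{G}_i$ contains the synthetic observations $X_2,\ldots,X_i$, which could a priori carry information about $\mu_{i+1}$. I would establish the conditional independence $\sigma(X_2,\ldots,X_i) \ci \mathcal{M}$ given $\sigma(\mu_1,\ldots,\mu_i)$. By the marginal computation in Step 1, $\mathcal{L}((X_2,\ldots,X_i)\mid\mathcal{M}) = \mu_2\otimes\cdots\otimes\mu_i$, and this kernel is $\sigma(\mu_1,\ldots,\mu_i)$-measurable.

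Concretely, for bounded $H$ and bounded $\mathcal{M}$-measurable $W$, the tower law gives
\begin{equation*}
  \mathbb{E}[W H(X_2,\ldots,X_i)\mid\mu_{1:i}]
  = \mathbb{E}\Bigl[W \int H\, d(\mu_2\otimes\cdots\otimes\mu_i)\Bigm|\mu_{1:i}\Bigr]
  = \mathbb{E}[W\mid\mu_{1:i}]\,\mathbb{E}[H(X_2,\ldots,X_i)\mid\mu_{1:i}],
\end{equation*}
where $\mu_{1:i}$ abbreviates $\mu_1,\ldots,\mu_i$. The second equality holds because the integral is $\sigma(\mu_1,\ldots,\mu_i)$-measurable, and its conditional expectation given $\mu_{1:i}$ is itself, which equals $\mathbb{E}[H(X_2,\ldots,X_i)\mid\mu_{1:i}]$ by the tower law.

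Taking $W = \mu_{i+1}(A)$, the standard consequence of conditional independence gives $\mathbb{E}[\mu_{i+1}(A)\mid\mu_{1:i}, X_2,\ldots,X_i] = \mathbb{E}[\mu_{i+1}(A)\mid\mu_{1:i}]$. The $\sigma$-algebra on the left is exactly $\mathcal{G}_i$, which is the second identity.

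\textbf{Conclusion.} Combining the two identities, $\mathcal{L}(X_{i+1}\mid\mathcal{G}_i)(A) = \mathbb{E}[\mu_{i+1}\mid\mu_1,\ldots,\mu_i](A)$ almost surely for each $A$. Hence $\mathcal{L}(X_{i+1}\mid\mathcal{G}_i) = \mathbb{E}[\mu_{i+1}\mid\mu_1,\ldots,\mu_i]$ by essential uniqueness of these random measures. Therefore $\mu_i$ equals one if and only if it equals the other, for each $i \in \{1,\ldots,n\}$, which proves the equivalence.
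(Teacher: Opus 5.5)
Your proposal is correct and follows essentially the same route as the paper. Both reduce the equivalence to the two identities $\mathcal{L}(X_{i+1}\mid\mathcal{G}_i) = \mathbb{E}[\mu_{i+1}\mid\mu_1,\ldots,\mu_i,X_2,\ldots,X_i] = \mathbb{E}[\mu_{i+1}\mid\mu_1,\ldots,\mu_i]$: the first via the conditional law of $X_{i+1}$ given the forecasts and earlier synthetic observations plus the tower law, the second via conditional independence of the forecasts and $(X_2,\ldots,X_i)$ given $\mu_1,\ldots,\mu_i$. The only difference is that you verify these conditional-independence facts by hand, conditioning on all of $\sigma(\mu_1,\ldots,\mu_{n+1})$, whereas the paper conditions on $\mu_1,\ldots,\mu_{i+1}$ and cites Kallenberg's Theorem 8.9.
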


\begin{proof}
Fix $i \in \{1, \ldots, n\}$. It suffices to show
\begin{eqnarray}
  \mathcal{L}(X_{i+1}|\mu_1, X_2, \ldots, \mu_{i-1}, X_i, \mu_i) & = & \mathbb{E}[\mu_{i+1}|\mu_1, \ldots, \mu_i, X_2, \ldots, X_i] \label{step1}\\
                                                                 & = & \mathbb{E}[\mu_{i+1}|\mu_1, \ldots, \mu_i]. \label{step2}
\end{eqnarray}

  Proof of equation (\ref{step1}):\, First, note that
  \begin{equation*}
    X_{i+1} \ci (X_2, \ldots, X_i) \, | \, \mu_1, \ldots, \mu_{i+1},
  \end{equation*}
  so by Theorem 8.9 of \cite{Kll21},
  \begin{equation*}
    \mathcal{L}(X_{i+1}|\mu_1, \ldots, \mu_{i+1}, X_2, \dots, X_i) = \mu_{i+1}.
  \end{equation*}
  
  Let $A \in \mathcal{S}$. Then by Theorem \ref{Disintegration}, 
  \begin{eqnarray*}
    \mathbb{E}[\mathbbm{1}(X_{i+1} \in A) | \mu_1, \ldots, \mu_{i+1}, X_2, \ldots, X_i] & = & \int \mathbbm{1}(x \in A) \, \mu_{i+1}(dx) \\
                                                                                        & = & \mu_{i+1}(A).
  \end{eqnarray*}

  Therefore, by the Tower Law,
  \begin{equation*}
    \mathbb{E}[\mathbbm{1}(X_{i+1} \in A)|\mu_1, \ldots, \mu_i, X_2, \ldots, X_i] = \mathbb{E}[\mu_{i+1}(A)|\mu_1, \ldots, \mu_i, X_2, \ldots, X_i],
  \end{equation*}
  which gives equation (\ref{step1}).

  Proof of equation (\ref{step2}):\, From the assumed joint distribution of $X_2$, \ldots, $X_{n+1}$, we have
  \begin{eqnarray*}
    \mathcal{L}((X_2, \ldots, X_i)|\mu_1, \ldots, \mu_{i+1}) & = & \mu_2 \otimes \ldots \otimes \mu_i \\
                                                             & = & \mathcal{L}((X_2, \ldots, X_i)|\mu_1, \ldots, \mu_i),
  \end{eqnarray*}
  and so by Theorem 8.9 of \cite{Kll21},
  \begin{equation*}
    \mu_{i+1} \ci (X_2, \ldots, X_i) \, | \, \mu_1, \ldots, \mu_i.
  \end{equation*}

  Then by Theorem 8.9 of \cite{Kll21} again,
  \begin{equation*}
    \mathcal{L}(\mu_{i+1}|\mu_1, \ldots, \mu_i, X_2, \ldots, X_i) = \mathcal{L}(\mu_{i+1}|\mu_1, \ldots, \mu_i).
  \end{equation*}
  
  Equation (\ref{step2}) now follows from Theorem \ref{Disintegration}.
\end{proof}

Proposition \ref{synthetic_observations} suggests that we can take a sequence of random probability measures and reinterpret it as a rolling-event sequence of forecasts for suitable variables $X_2$, \ldots, $X_{n+1}$, and shows that if the original sequence is a martingale then the rolling-event forecast sequence will be auto-calibrated. We call these variables $X_2$, \ldots, $X_{n+1}$ `synthetic observations', since they play the role of observations in this construction but are derived from the forecasts. Note if $\mu_{n+1} = \delta_{Y}$ for an observation $Y$ then we can choose $X_{n+1} = Y$.

The variable $X_i$ is a random draw from the probability distribution $\mu_i$, so to generate these synthetic observations we must be able to sample from probability distributions on $(S, \mathcal{S})$. In the case where $(S, \mathcal{S}) = (\mathbb{R}, \mathcal{B}(\mathbb{R}))$ we can write the forecasts as random distribution functions $F_1$, \ldots, $F_{n+1}$ and sample from them using their quantile functions.

For $F$ a distribution function, we let $F^{-1}$ be the corresponding quantile function, where for $t \in (0,1)$:
\begin{equation*}
  F^{-1}(t) = \inf \{y \in \mathbb{R}|F(y) \geq t\}.
\end{equation*}
The key property of $F^{-1}$ we shall need is that, for all $y \in \mathbb{R}$ and $t \in (0,1)$,
\begin{equation*}
  F^{-1}(t) \leq y \iff t \leq F(y).
\end{equation*}
The function $F^{-1}(t)$ is measurable in $t$ since it is non-decreasing. To see that $F^{-1}(t)$ is measurable in $F$ for fixed $t$, note that $F^{-1}(t) \in (-\infty, y] \iff F(y) \in [t, \infty)$.

The following Lemma then shows we can use $X_i = F_i^{-1}(U_i)$ as our synthetic observations in Proposition \ref{synthetic_observations}; results of this type are well known, but we include a proof in the \hyperref[appn]{Appendix} for convenience.
\begin{lemma}\label{F_inv_U}
  Let $F_1$, \ldots, $F_n$ be random distribution functions and let $\Psi \subset \mathcal{F}$. Let $U_1$, \ldots, $U_n$ be i.i.d. $\mathcal{U}([0,1])$ random variables independent of $\sigma(F_1, \ldots, F_n, \Psi)$. Then

\begin{equation*}
  \mathcal{L}((F_1^{-1}(U_1), \ldots, F_n^{-1}(U_n))|F_1, \ldots, F_n, \Psi) = \phi^{-1}(F_1) \otimes \ldots \otimes \phi^{-1}(F_n).
\end{equation*}
\end{lemma}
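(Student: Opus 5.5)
To identify the conditional law of the vector $X=(F_1^{-1}(U_1), \ldots, F_n^{-1}(U_n))$ given $\Psi' = \sigma(F_1, \ldots, F_n, \Psi)$, we must show that for almost every $\omega$ the regular conditional distribution coincides with the product measure $\phi^{-1}(F_1) \otimes \ldots \otimes \phi^{-1}(F_n)$. Both sides are random probability measures on $(\mathbb{R}^n, \mathcal{B}(\mathbb{R}^n))$. The sets $(-\infty, y_1] \times \ldots \times (-\infty, y_n]$ with $y_j$ rational form a countable $\pi$-system generating $\mathcal{B}(\mathbb{R}^n)$. It therefore suffices, by a monotone-class argument (Theorem 1.1 of \cite{Kll21}) applied pointwise in $\omega$ outside a single null set, to verify that for each fixed $y = (y_1, \ldots, y_n) \in \mathbb{Q}^n$
\begin{equation*}
  \mathbb{P}(F_1^{-1}(U_1) \leq y_1, \ldots, F_n^{-1}(U_n) \leq y_n \mid \Psi') = \prod_{j=1}^n F_j(y_j) \quad \text{a.s.}
\end{equation*}
Here we use that the product measure evaluated on such a rectangle is $\prod_j F_j(y_j)$. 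A further point is that the map $\omega \mapsto \phi^{-1}(F_1) \otimes \ldots \otimes \phi^{-1}(F_n)$ is a $\Psi'$-measurable random measure, which follows from the same $\pi$-system argument since its values on rectangles are products of measurable functions.

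To compute the conditional probability, use the key property of the quantile function: $F_j^{-1}(U_j) \leq y_j \iff U_j \leq F_j(y_j)$. The event in question is then $\{U_j \leq F_j(y_j) \text{ for all } j\}$. Set
\begin{equation*}
  H(\omega, u) = \prod_{j=1}^n \mathbbm{1}(u_j \leq F_j(y_j)(\omega)),
\end{equation*}
which is $\Psi' \otimes \mathcal{B}([0,1]^n)$-measurable. The vector $(U_1, \ldots, U_n)$ is independent of $\Psi'$, so its conditional law given $\Psi'$ is the constant $\mathcal{U}([0,1])^{\otimes n}$. Theorem \ref{Disintegration}, applied with $\eta = (U_1, \ldots, U_n)$, then gives
\begin{equation*}
  \mathbb{E}[H(U) \mid \Psi'] = \int_{[0,1]^n} \prod_j \mathbbm{1}(u_j \leq F_j(y_j)) \, du = \prod_j F_j(y_j),
\end{equation*}
by Fubini, as required.

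The main obstacle is the measure-theoretic bookkeeping rather than the computation. Specifically, we need the existence of a regular version of the conditional law, which holds because $\mathbb{R}^n$ is Borel. We also need to pass from almost-sure equality on each rational rectangle to almost-sure equality of the measures, using a countable union of null sets and the $\pi$–$\lambda$ theorem. Finally, we need the measurability of $(\omega, u) \mapsto H$, which follows because each $F_j(y_j)$ is a $\Psi'$-measurable random variable and $\{(a, u) : u \leq a\}$ is Borel.
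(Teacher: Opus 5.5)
Your proposal is correct and follows essentially the same route as the paper: reduce by a monotone-class argument to rectangles $(-\infty,y_1]\times\cdots\times(-\infty,y_n]$, rewrite the event using $F_j^{-1}(u)\le y_j \iff u\le F_j(y_j)$, and evaluate the conditional expectation with Theorem \ref{Disintegration} and Fubini to obtain $\prod_j F_j(y_j)$. Your extra bookkeeping (rational corners, comparison with a regular version, and measurability of the product kernel) spells out what the paper covers with the phrase ``monotone-class argument''.
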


Since Proposition \ref{PIT_values} allows us to test the auto-calibration of rolling-event probability forecast sequences made for real-valued observations, we are now ready to test the auto-calibration of a fixed-event probability forecast sequence made for a real-valued observation.
\begin{proposition}\label{syn_PIT_values}
  Let $Y$ be an $\mathbb{R}$-valued random variable, and let $F_1, \ldots, F_n$ be probability forecasts for $Y$. Let $U_2$, \ldots, $U_n$, $V_1$, \ldots, $V_n$ be i.i.d. $\mathcal{U}([0,1])$ random variables independent of $\sigma(F_1, \ldots, F_n, Y)$.

  For each $i \in \{1, \ldots, n-1\}$, let $Z_i = Z(F_i, F_{i+1}^{-1}(U_{i+1}), V_i)$. Also, let $Z_n = Z(F_n, Y, V_n)$.

  Then the following are equivalent:
\begin{enumerate}[label=(\roman*)]
\item the sequence $(F_1, \ldots, F_n)$ is auto-calibrated;
\item for each $i \in \{1, \ldots, n\}$, $Z_i$ has distribution $\mathcal{U}([0,1])$ and is independent of $\sigma(F_1, \ldots, F_i)$.
\end{enumerate}

  Consequently, {\normalfont (i)} implies $Z_1$, \ldots, $Z_n$ are i.i.d. $\mathcal{U}([0,1])$.
\end{proposition}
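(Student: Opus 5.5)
The plan is to reduce the statement to Proposition \ref{PIT_values} by means of synthetic observations, using Proposition \ref{auto-calibration_equivalence} and Proposition \ref{synthetic_observations}. Set $F_{n+1} = \phi(\delta_Y)$, which is the random distribution function $y \mapsto \mathbbm{1}(Y \leq y)$. For $i \in \{2, \ldots, n\}$ set $X_i = F_i^{-1}(U_i)$, and set $X_{n+1} = Y$. With these choices the variables $Z_1, \ldots, Z_n$ in the statement are exactly the rolling-event PIT values $Z(F_i, X_{i+1}, V_i)$ for the forecasts $(F_1, \ldots, F_n)$ and the observations $(X_2, \ldots, X_{n+1})$.

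The first step checks the hypothesis of Proposition \ref{synthetic_observations}:
\begin{equation*}
  \mathcal{L}((X_2, \ldots, X_{n+1})|F_1, \ldots, F_{n+1}) = \phi^{-1}(F_2) \otimes \ldots \otimes \phi^{-1}(F_n) \otimes \delta_Y.
\end{equation*}
Apply Lemma \ref{F_inv_U} to $F_2, \ldots, F_n$ with $\Psi = \sigma(F_1, Y)$. This is allowed because the $U_i$ are independent of $\sigma(F_1, \ldots, F_n, Y)$. It gives the conditional law of $(X_2, \ldots, X_n)$ given $\sigma(F_1, \ldots, F_n, Y)$ as the product $\phi^{-1}(F_2)\otimes\ldots\otimes\phi^{-1}(F_n)$. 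Since $Y$ is measurable with respect to this $\sigma$-algebra, the conditional law of $(X_2, \ldots, X_n, Y)$ is that product tensored with $\delta_Y$. Finally, $\sigma(F_1, \ldots, F_{n+1}) = \sigma(F_1, \ldots, F_n, Y)$, because $Y$ and $\delta_Y$ generate the same $\sigma$-algebra. To justify this identity I would recover $Y$ as $F_{n+1}^{-1}(1/2)$; this is the small measurability point to take care over.

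The second step is the chain of equivalences. By Proposition \ref{auto-calibration_equivalence}, (i)$\iff$(iv), so (i) holds if and only if $(F_1, \ldots, F_n, F_{n+1})$ is a martingale. By Proposition \ref{synthetic_observations}, that holds if and only if $(F_1, \ldots, F_n)$ is auto-calibrated as a rolling-event sequence for $(X_2, \ldots, X_{n+1})$. The $V_i$ are i.i.d. $\mathcal{U}([0,1])$ and independent of $\sigma(F_1, \ldots, F_n, Y, U_2, \ldots, U_n)$, and the $X_i$ are measurable with respect to this $\sigma$-algebra. So Proposition \ref{PIT_values} applies, and rolling-event auto-calibration is equivalent to the following: each $Z_i$ is $\mathcal{U}([0,1])$ and independent of $\sigma(F_1, \ldots, F_i, X_2, \ldots, X_i)$.

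The main obstacle is that this condition is stronger on its face than (ii), which only asks for independence of $\sigma(F_1, \ldots, F_i)$. One direction is easy: the stronger condition implies (ii), since $\sigma(F_1, \ldots, F_i)$ is a sub-$\sigma$-algebra. For the converse I would not go through Proposition \ref{PIT_values}. Instead I would argue directly that (ii) gives the martingale property. By Proposition \ref{auto-calibration_PIT} with $\Psi = \sigma(F_1, \ldots, F_i)$, condition (ii) for $Z_i$ gives $F_i = F_{X_{i+1}|\sigma(F_1, \ldots, F_i)}$; here $V_i$ must be independent of $\sigma(\Psi, X_{i+1})$, which holds. For $i < n$, equation (\ref{step1}) and the Tower Law give $F_{X_{i+1}|\sigma(F_1,\ldots,F_i)} = \mathbb{E}[F_{i+1}|F_1, \ldots, F_i]$, since $X_{i+1}$ has conditional law $F_{i+1}$ given $\sigma(F_1,\ldots,F_{i+1})$. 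Hence $(F_1, \ldots, F_n)$ is a martingale. For $i = n$ we get $F_n = F_{Y|F_1,\ldots,F_n}$. Together these give condition (iii) of Proposition \ref{auto-calibration_equivalence}, and therefore (i).

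For the consequence, I would apply (ii) successively to show that $Z_i$ is uniform and independent of $\sigma(Z_1, \ldots, Z_{i-1})$. Here I would use the stronger form of independence obtained from Proposition \ref{PIT_values}, since each $Z_j$ with $j<i$ is a function of $F_j$, $X_{j+1}$ and $V_j$. Mere independence from $\sigma(F_1, \ldots, F_i)$ would not suffice, because $Z_j$ also depends on $X_{j+1}$ and $V_j$. So the stronger statement, together with the independence of $V_j$ from everything else, is what I would use.
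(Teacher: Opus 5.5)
Your proposal is correct. For (i)$\Rightarrow$(ii) and for the final consequence it is the paper's argument: Proposition \ref{auto-calibration_equivalence}, then Lemma \ref{F_inv_U}, then Proposition \ref{synthetic_observations} and Proposition \ref{PIT_values}. Your observation that $Y=F_{n+1}^{-1}(1/2)$, so that $\sigma(F_{n+1})=\sigma(Y)$, makes explicit something the paper uses silently.

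The difference is in (ii)$\Rightarrow$(i). The paper just says it is ``done by Proposition \ref{PIT_values}''. Applied to the observations $(X_2,\ldots,X_n,Y)$, that proposition characterises auto-calibration by independence of $Z_i$ from $\sigma(F_1,\ldots,F_i,X_2,\ldots,X_i)$. So the paper tacitly treats this as the same as independence from $\sigma(F_1,\ldots,F_i)$.

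You saw this and instead go directly through Proposition \ref{auto-calibration_PIT} with $\Psi=\sigma(F_1,\ldots,F_i)$ and the Tower Law. This gives $F_i=\mathbb{E}[F_{i+1}\mid F_1,\ldots,F_i]$ and $F_n=F_{Y|\sigma(F_1,\ldots,F_n)}$, which is condition (iii) of Proposition \ref{auto-calibration_equivalence}. It is the same mechanism as the paper's later Proposition \ref{R}, and it gives a complete proof with no conditional-independence bookkeeping.

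The paper's route can be completed another way. For $i<n$, $Z_i\ci(X_2,\ldots,X_i)\mid F_1,\ldots,F_{i+1}$ and $F_{i+1}\ci(X_2,\ldots,X_i)\mid F_1,\ldots,F_i$, the latter as in (\ref{step2}). For $i=n$, the $U_j$ are independent of $\sigma(F_1,\ldots,F_n,Y,V_n)$. Hence $\mathbb{P}(Z_i\le t\mid F_1,\ldots,F_i,X_2,\ldots,X_i)=\mathbb{P}(Z_i\le t\mid F_1,\ldots,F_i)$ always. The two independence conditions therefore coincide whether or not the sequence is calibrated, which is a slightly stronger fact than your route needs.

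One citation nit: (\ref{step1}) conditions on the $X_j$ as well, so it is not quite the identity you want. What you actually use suffices: $X_{i+1}$ has conditional law $F_{i+1}$ given $\sigma(F_1,\ldots,F_{i+1})$, followed by the Tower Law.

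Your remark that the $V_j$ with $j<i$ must be handled separately in the consequence is right. The paper's proof of Proposition \ref{PIT_values} also leaves that step implicit.
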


\begin{proof} By Proposition \ref{auto-calibration_equivalence}, $(F_1, \ldots, F_n)$ is auto-calibrated if and only if $(F_1, \ldots, F_n, \delta_Y)$ is a martingale.

  By Lemma \ref{F_inv_U},
\begin{equation*}
\mathcal{L}((F_2^{-1}(U_2), \ldots, F_n^{-1}(U_n))|F_1, \ldots, F_n, \delta_Y) = \phi^{-1}(F_2) \otimes \ldots \otimes \phi^{-1}(F_n),
\end{equation*}
and so
\begin{equation*}
  \mathcal{L}((F_2^{-1}(U_2), \ldots, F_n^{-1}(U_n), Y)|F_1, \ldots, F_n, \delta_Y) = \phi^{-1}(F_2) \otimes \ldots \otimes \phi^{-1}(F_n) \otimes \delta_Y.
\end{equation*}

  Then by Proposition \ref{synthetic_observations}, $(F_1, \ldots, F_n, \delta_Y)$ is a martingale if and only if $(F_1, \ldots, F_n)$ is auto-calibrated as a rolling-event probability forecast sequence for observations 
\begin{equation*}
  (F_2^{-1}(U_2), \ldots, F_n^{-1}(U_n), Y).
\end{equation*}

  We are then done by Proposition \ref{PIT_values}.
\end{proof}

We call $Z_1$, \ldots, $Z_{n-1}$ as defined in Proposition \ref{syn_PIT_values} `synthetic PIT values', since they are PIT values based on synthetic observations; $Z_n$ is of course the ordinary PIT value for $F_n$. We can test that the sequence of synthetic PIT values is i.i.d. $\mathcal{U}([0,1])$ using any of the tests we would use for ordinary PIT values.

Finally for this section, we give one possible approach to testing the auto-calibration of a fixed-event probability forecast sequence for an observation that is not real-valued.

\begin{proposition}
Let $Y$ be a random element in a Borel space $(S, \mathcal{S})$, and let $(\mu_1, \ldots, \mu_n)$ be an auto-calibrated sequence of probability forecasts for $Y$.

  Let $g(\mu,y)$ be a measurable $\mathbb{R}$-valued function of a probability measure $\mu$ on $(S, \mathcal{S})$ and of $y \in S$. Let $X_2$, \ldots, $X_n$ be random elements in $(S, \mathcal{S})$ satisfying
\begin{equation*}
  \mathcal{L}((X_2, \ldots, X_n)|\mu_1, \ldots, \mu_n, Y) = \mu_2 \otimes \ldots \otimes \mu_n.
\end{equation*}

  Let $V_1$, \ldots, $V_n$ be i.i.d. $\mathcal{U}([0,1])$ random variables independent of $\sigma(\mu_1$, \ldots, $\mu_n$, $Y$, $X_2$, \ldots, $X_n)$.

  Then for each $i \in \{1, \ldots, n-1\}$, the variable $Z_i = Z(\Gamma_g(\mu_i), g(\mu_i, X_{i+1}), V_i)$ has distribution $\mathcal{U}([0,1])$ and is independent of $\sigma(\mu_1, \ldots, \mu_i)$. Also, $Z_n = Z(\Gamma_g(\mu_n), g(\mu_n,Y), V_n)$ has distribution $\mathcal{U}([0,1])$ and is independent of $\sigma(\mu_1, \ldots, \mu_n)$. Consequently, $Z_1$, \ldots, $Z_n$ are i.i.d. $\mathcal{U}([0,1])$.
\end{proposition}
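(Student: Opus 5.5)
Since $(\mu_1, \ldots, \mu_n)$ is auto-calibrated, Proposition \ref{auto-calibration_equivalence} (implication (ii)$\Rightarrow$(iv)) tells us that $(\mu_1, \ldots, \mu_n, \delta_Y)$ is a martingale. This extended sequence is the one to which Proposition \ref{synthetic_observations} will be applied, taking $\mu_{n+1} = \delta_Y$ and $X_{n+1} = Y$.

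\textbf{Step 1: the joint-law hypothesis.} We must check that
\begin{equation*}
\mathcal{L}((X_2, \ldots, X_n, Y)|\mu_1, \ldots, \mu_n, \delta_Y) = \mu_2 \otimes \ldots \otimes \mu_n \otimes \delta_Y.
\end{equation*}
The map $s \mapsto \delta_s$ is measurable, and on a Borel space $Y$ is recoverable from $\delta_Y$, since singletons are measurable. Hence $\sigma(\mu_1, \ldots, \mu_n, \delta_Y) = \sigma(\mu_1, \ldots, \mu_n, Y)$. Take product sets $A_2 \times \ldots \times A_n \times B$ and condition on this $\sigma$-algebra. Because $\mathbbm{1}(Y \in B)$ is measurable with respect to it, the conditional probability equals $\mathbbm{1}(Y \in B)$ times the assumed law of $(X_2, \ldots, X_n)$, namely $\mu_2(A_2) \cdots \mu_n(A_n)\,\delta_Y(B)$. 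A $\pi$-$\lambda$ (monotone class) argument then extends this from product sets to the full product $\sigma$-algebra. This is essentially the same manipulation as in the proof of Proposition \ref{syn_PIT_values}.

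\textbf{Step 2: rolling-event auto-calibration.} Proposition \ref{synthetic_observations} now gives that $(\mu_1, \ldots, \mu_n)$ is auto-calibrated as a rolling-event forecast sequence for the observations $(X_2, \ldots, X_n, Y)$.

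\textbf{Step 3: transfer to the real line.} Applying Proposition \ref{transformation} with the given $g$, the sequence $(\Gamma_g(\mu_1), \ldots, \Gamma_g(\mu_n))$ is auto-calibrated as a rolling-event sequence for the real-valued observations
\begin{equation*}
(g(\mu_1, X_2), \ldots, g(\mu_{n-1}, X_n), g(\mu_n, Y)).
\end{equation*}

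\textbf{Step 4: the PIT values.} The $V_i$ are i.i.d.\ $\mathcal{U}([0,1])$ and independent of $\sigma(\mu_1, \ldots, \mu_n, Y, X_2, \ldots, X_n)$. That $\sigma$-algebra contains every transformed forecast and observation, so the $V_i$ are independent of those too. Proposition \ref{PIT_values}, (i)$\Rightarrow$(ii), then applies directly: each $Z_i$ is $\mathcal{U}([0,1])$ and independent of
\begin{equation*}
\sigma\bigl(\Gamma_g(\mu_1), \ldots, \Gamma_g(\mu_i), g(\mu_1, X_2), \ldots, g(\mu_{i-1}, X_i)\bigr).
\end{equation*}
This is the wrong $\sigma$-algebra for the claim, which asks for independence of $\sigma(\mu_1, \ldots, \mu_i)$, and that need not be contained in the one above.

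\textbf{Main obstacle.} The real work is this final strengthening. The approach is to go back one step, before applying $g$, and use the rolling-event auto-calibration of Step 2 directly:
\begin{equation*}
\mathcal{L}(X_{i+1}|\mathcal{H}_i) = \mu_i, \qquad \mathcal{H}_i = \sigma(\mu_1, X_2, \ldots, \mu_{i-1}, X_i, \mu_i),
\end{equation*}
with $Y$ in place of $X_{i+1}$ when $i = n$. Let $W_i$ denote $g(\mu_i, X_{i+1})$, or $g(\mu_n, Y)$ when $i = n$.

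\begin{itemize}
\item By Theorem \ref{Disintegration}, with $H(\omega, x) = \mathbbm{1}(g(\mu_i(\omega), x) \le y)$, which is $\mathcal{H}_i \otimes \mathcal{S}$-measurable, we get $F_{W_i|\mathcal{H}_i}(y) = \mathbb{P}(W_i \le y|\mathcal{H}_i) = \Gamma_g(\mu_i)(y)$. So $\Gamma_g(\mu_i) = F_{W_i|\mathcal{H}_i}$.
\item $V_i$ is independent of $\sigma(\mathcal{H}_i, W_i)$.
\end{itemize}

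Proposition \ref{auto-calibration_PIT}, (i)$\Rightarrow$(ii), then gives that $Z_i$ is $\mathcal{U}([0,1])$ and independent of $\mathcal{H}_i$, which contains $\sigma(\mu_1, \ldots, \mu_i)$. For the i.i.d.\ conclusion, note that $Z_1, \ldots, Z_{i-1}$ are functions of $\mathcal{H}_i$ and $V_1, \ldots, V_{i-1}$. We therefore apply Proposition \ref{auto-calibration_PIT} with $\Psi = \sigma(\mathcal{H}_i, V_1, \ldots, V_{i-1})$. Enlarging the $\sigma$-algebra this way leaves the conditional law of $W_i$ unchanged, because the $V_j$ are independent of everything else. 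This gives that $Z_i$ is independent of $(Z_1, \ldots, Z_{i-1})$, and induction on $i$ completes the proof.
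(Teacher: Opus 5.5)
Your proof is correct. Steps 1--3 are exactly the paper's argument: it invokes Proposition~\ref{auto-calibration_equivalence}, records the joint law of $(X_2,\ldots,X_n,Y)$, applies Proposition~\ref{synthetic_observations} and then Proposition~\ref{transformation}, and closes with ``We are then done by Proposition~\ref{PIT_values}.'' In Step~1 you also make explicit that $\sigma(\mu_1,\ldots,\mu_n,\delta_Y)=\sigma(\mu_1,\ldots,\mu_n,Y)$, which the paper uses silently. The clean reason is $\{Y\in B\}=\{\delta_Y(B)=1\}$, not measurability of singletons.

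You depart from the paper at the last step, and your objection there is well founded. Proposition~\ref{PIT_values} only gives independence of $Z_i$ from the $\sigma$-algebra generated by the transformed forecasts and observations. That $\sigma$-algebra can be strictly coarser than $\sigma(\mu_1,\ldots,\mu_i)$; for constant $g$ it is trivial. So the paper's final citation does not by itself deliver the stated conclusion.

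Your repair gives the full claim. You redo the disintegration computation from the proof of Proposition~\ref{transformation} with the untransformed history $\mathcal{H}_i$, which gives $\Gamma_g(\mu_i)=F_{W_i|\mathcal{H}_i}$. You then apply Proposition~\ref{auto-calibration_PIT} with $\Psi=\mathcal{H}_i$. Enlarging $\Psi$ by $V_1,\ldots,V_{i-1}$ also makes the i.i.d.\ conclusion rigorous. The ``Consequently'' step of Proposition~\ref{PIT_values} leaves implicit that the earlier PIT values also depend on their own $V_j$, which lie outside the $\sigma$-algebra in its (ii).

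With your repair, Step~3 and the appeal to Proposition~\ref{PIT_values} are no longer needed. Your argument really uses only Step~2, Theorem~\ref{Disintegration} and Proposition~\ref{auto-calibration_PIT}. The paper's route gains brevity by composing black-box propositions; yours establishes the statement at the strength it actually claims.
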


\begin{proof}
  By Proposition \ref{auto-calibration_equivalence}, $(\mu_1, \ldots, \mu_n, \delta_Y)$ is a martingale.

  Note that
\begin{equation*}
\mathcal{L}((X_2, \ldots, X_n, Y)|\mu_1, \ldots, \mu_n, Y) = \mu_2 \otimes \ldots \otimes \mu_n \otimes \delta_Y.
\end{equation*}

  Then by Proposition \ref{synthetic_observations}, $(\mu_1, \ldots, \mu_n)$ is auto-calibrated as a rolling-event probability forecast sequence for observations $(X_2, \ldots, X_n, Y)$.

  Then by Proposition \ref{transformation}, $(\Gamma_g(\mu_1), \ldots, \Gamma_g(\mu_n))$ is auto-calibrated as a rolling-event probability sequence for observations $(g(\mu_1, X_2), \ldots, g(\mu_{n-1}, X_n), g(\mu_n, Y))$.

  We are then done by Proposition \ref{PIT_values}.
\end{proof}

\section{Probability forecast revisions}\label{revisions}
In this section we propose a definition of the revision between two probability forecasts made for a real-valued observation, and show that auto-calibration of a fixed-event probability forecast sequence can be rewritten in terms of the conditional expectations of the forecast revisions.

The following definition of the function $R$ was inspired in part by Czado, Gneiting and Held's equation (2) in \cite{Czd09}---note the resemblance of their equation to our equation (\ref{distribution_of_R_of_delta}) below.
\begin{definition}
  For $F, G$ distribution functions, $R(F, G) : (0,1) \mapsto [0,1]$ is given by
\begin{equation*}
  R(F, G)(t) = \int_\mathbb{R} \int_0^1 \mathbbm{1}(Z(F, y, v) \leq t) \,dv \,dG(y).
\end{equation*}
\end{definition}
By Proposition \ref{distribution_of_Z}, $R(F, G)(t) = Z(G, y_t, v_t)$, where $y_t$ and $v_t$ are defined as in the statement of Proposition \ref{distribution_of_Z}. Note if $F$ is continuous and strictly increasing, and $G$ is continuous, then $R(F, G)(t) = G(F^{-1}(t))$. For all distribution functions $F$, all $y \in \mathbb{R}$ and all $t \in (0,1)$,
\begin{equation}\label{distribution_of_R_of_delta}
  R(F, \phi(\delta_y))(t) = \begin{cases}
    0 & t < F_-(y) \\
    \frac{t-F_-(y)}{F(y)-F_-(y)} & F_-(y) \leq t < F(y) \\
    1 & F(y) \leq t,
  \end{cases}
\end{equation}
where if $F_-(y) = F(y)$ then the second case is not reached for any $t$.

If $F, G$ are distribution functions and $U, V$ are i.i.d. $\mathcal{U}([0,1])$ variables, then the CDF of $Z(F, G^{-1}(U), V)$ is given by, for all $t \in \mathbb{R}$,
\begin{equation*}
  \mathbb{P}(Z(F, G^{-1}(U), V) \leq t) = \begin{cases}
    0                           & t < 0 \\
    \lim_{s \to 0^+} R(F, G)(s) & t = 0 \\
    R(F, G)(t)                  & 0 < t < 1 \\
    1                           & 1 \leq t,
  \end{cases}
\end{equation*}
and so it is determined by $R(F, G)$; we can think of $R(F, G)$ as being the important part of the CDF of $Z(F, G^{-1}(U), V)$.

Let $I : (0,1) \to [0,1]$ be defined by $I(t) = t$. By Lemma \ref{PIT_is_uniform}, for $F, G$ distribution functions, $R(F, G) = I$ if and only if $F = G$.

Note $R(F, G)$ is non-decreasing and right-continuous. We use the $\sigma$-algebra on the set of non-decreasing and right-continuous functions from $(0,1)$ to $[0,1]$ generated by $(r \mapsto r(t))$ for $t \in (0,1)$. $R$ is a measurable function of $F$ and $G$ by Lemma 3.2(i) of \cite{Kll21}. For $R'$ a random non-decreasing and right-continuous function from $(0,1)$ to $[0,1]$ and $\Psi \subset \mathcal{F}$ we have the conditional expectation $\mathbb{E}[R'|\Psi]$ with the usual properties.

If $F$ and $G$ are forecasts for the same observation made by the same forecaster, with $G$ made after $F$, then we call $R(F, G)$ the revision from $F$ to $G$. This corresponds to Nordhaus's definition of the forecast revision $Q_2 - Q_1$ from point forecast $Q_1$ to point forecast $Q_2$ in \cite{nrd87}. The forward implication of the following proposition shows that if $(F, G)$ forms part of a martingale then the revision from $F$ to $G$ has expectation $I$.
\begin{proposition}\label{R}
  Let $F, G$ be random distribution functions, and let $\Psi \subset \mathcal{F}$ be a $\sigma$-algebra. Then the following are equivalent:

\begin{enumerate}[label=(\roman*)]
  \item $F = \mathbb{E}[G|\Psi]$;
  \item $F$ is $\Psi$-measurable and $\mathbb{E}[R(F,G)|\Psi] = I$.
\end{enumerate}
\end{proposition}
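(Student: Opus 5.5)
The plan is to compute $\mathbb{E}[R(F,G)(t)|\Psi]$ pointwise in $t$ using the representation $R(F,G)(t) = Z(G, y_t, v_t)$ from Proposition \ref{distribution_of_Z}. Here $y_t$ and $v_t$ are computed from $F$, so they are $\Psi$-measurable whenever $F$ is. We would then use Lemma \ref{PIT_is_uniform} to convert ``expected revision equals $I$'' into ``the conditional mean of $G$ equals $F$''. Both directions assume $F$ is $\Psi$-measurable. For (i) this holds because $\mathbb{E}[G|\Psi]$ is $\Psi$-measurable, and for (ii) it is part of the hypothesis. Set $\bar G = \mathbb{E}[G|\Psi]$, a random distribution function; Lemma 2.10(iii) of \cite{Kll17} guarantees that a version of it is a genuine distribution function.

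\textbf{Key step.} For fixed $t \in (0,1)$ we aim to prove the identity
\begin{equation*}
  \mathbb{E}[R(F,G)(t)|\Psi] = R(F, \bar G)(t) \quad \text{a.s.}
\end{equation*}
We write $R(F,G)(t) = (1-v_t)G_-(y_t) + v_t G(y_t)$ with $y_t, v_t$ functions of $F$. We then apply Theorem \ref{Disintegration} to the regular conditional distribution of $G$ given $\Psi$, viewed as a random element of the Borel space of distribution functions (via $\phi$, the space of probability measures on $\mathbb{R}$). The integrand is $H(\omega, g) = (1-v_t(\omega))g_-(y_t(\omega)) + v_t(\omega) g(y_t(\omega))$, which is $\Psi\otimes\mathcal{T}$-measurable and nonnegative. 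Disintegration gives
\begin{equation*}
  \mathbb{E}[R(F,G)(t)|\Psi] = (1-v_t)\,\mathbb{E}[G_-(y)|\Psi]\big|_{y=y_t} + v_t\,\mathbb{E}[G(y)|\Psi]\big|_{y=y_t}.
\end{equation*}
Next we need $\mathbb{E}[G(y)|\Psi] = \bar G(y)$ and $\mathbb{E}[G_-(y)|\Psi] = \bar G_-(y)$ for all $y$ simultaneously. The first holds because two right-continuous versions that agree on the rationals agree everywhere. The second follows from $G_-(y) = \sup_n G(y - 1/n)$ and conditional monotone convergence. Hence the right-hand side equals $Z(\bar G, y_t, v_t) = R(F, \bar G)(t)$.

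\textbf{Main obstacle.} This key step is where I expect the difficulty, because $y_t$ is random while the pointwise conditional-expectation identities hold only almost surely for each fixed $y$. Handling this cleanly requires either the disintegration route with a jointly measurable integrand, or a ``for all $y$ simultaneously'' version of $\bar G$.

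\textbf{Conclusion.} Given the key step, (i) $\Rightarrow$ (ii) is immediate: $\bar G = F$, and $R(F,F) = I$ by Lemma \ref{PIT_is_uniform}. For (ii) $\Rightarrow$ (i), we get $R(F,\bar G)(t) = t$ a.s. for each $t$. Intersecting over rational $t$ and using right-continuity in $t$ of both sides, this gives $R(F,\bar G) = I$ on a single almost-sure event. Lemma \ref{PIT_is_uniform}, applied $\omega$-wise, then yields $F = \bar G = \mathbb{E}[G|\Psi]$ almost surely.
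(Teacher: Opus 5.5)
Your proof is correct, but it takes a different route from the paper's. The paper introduces an auxiliary uniform $U$ so that $G^{-1}(U)$ has conditional law $\mathbb{E}[G|\Psi]$ given $\Psi$. It rewrites $\mathbb{E}[R(F,G)(t)|\Psi]$ as $\mathbb{P}(Z(F,G^{-1}(U),V)\le t\,|\,\Psi)$ and then invokes Proposition \ref{auto-calibration_PIT}. The measure-theoretic work (disintegration, the rational-$t$ and right-continuity step, and the appeal to Lemma \ref{PIT_is_uniform}) is inherited from that earlier result, and the random draw $G^{-1}(U)$ is exactly the synthetic observation of Section \ref{syn_PITs}.

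You instead use the closed form $R(F,G)(t)=Z(G,y_t,v_t)$ and the fact that $Z(\cdot,y,v)$ is affine in its first argument. This gives the stronger identity $\mathbb{E}[R(F,G)|\Psi]=R(F,\mathbb{E}[G|\Psi])$ for every $\Psi$-measurable $F$, after which both directions are an $\omega$-wise application of Lemma \ref{PIT_is_uniform}. Your route avoids auxiliary randomisation and shows why the proposition holds: the revision is linear in the later forecast, and it equals $I$ exactly when its two arguments agree. The cost is that you must handle the random evaluation point $y_t$ yourself.

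Your proposed resolution of that point is the right one, but state it through the kernel. With $\nu=\mathcal{L}(G|\Psi)$, the function $\tilde G(\omega,y)=\int g(y)\,\nu_\omega(dg)$ is a genuine distribution function for every $\omega$ and is a version of $\mathbb{E}[G|\Psi]$. Then $\int g_-(y)\,\nu_\omega(dg)=\tilde G_-(\omega,y)$ holds for all $(\omega,y)$ by ordinary monotone convergence against $\nu_\omega$. Conditional monotone convergence alone only gives this almost surely for each fixed $y$, which is not enough when $y=y_t$ is random.
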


\begin{proof}
Let $U \sim \mathcal{U}([0,1])$ be independent of $\sigma(\Psi, G)$. Then for $y \in \mathbb{R}$,
\begin{eqnarray*}
\mathbb{P}(G^{-1}(U) \leq y|\Psi) & = & \mathbb{E}[\mathbb{P}(G^{-1}(U) \leq y|\Psi, G)|\Psi] \\
                                  & = & \mathbb{E}[G(y)|\Psi] \\
                                  & = & \mathbb{E}[G|\Psi](y),
\end{eqnarray*}
so $F_{G^{-1}(U)|\Psi} = \mathbb{E}[G|\Psi]$.

  Let $V \sim \mathcal{U}([0,1])$ be independent of $\sigma(\Psi, G, U)$. Then for $t \in (0,1)$,
\begin{eqnarray*}
  \mathbb{E}[R(F, G)|\Psi](t) & = & \mathbb{E}\left[ \int_\mathbb{R} \int_0^1 \mathbbm{1}(Z(F, G^{-1}(u), v) \leq t) \,dv \,du \middle| \Psi \right] \\
                              & = & \mathbb{E}\left[ \mathbbm{1}(Z(F, G^{-1}(U), V) \leq t) \middle| \Psi \right],
\end{eqnarray*}
  so $\mathbb{E}[R(F, G)|\Psi] = I$ if and only if $Z(F, G^{-1}(U), V)$ has distribution $\mathcal{U}([0,1])$ and is independent of $\Psi$.

  We are then done by Proposition \ref{auto-calibration_PIT}.
\end{proof}

This allows us to rewrite auto-calibration in terms of forecast revisions, as follows.
\begin{proposition}\label{auto-calibrated_revisions}
  Let $Y$ be an $\mathbb{R}$-valued random variable, and let $F_1$, \ldots, $F_n$ be probability forecasts for $Y$. Then the following are equivalent:
\begin{enumerate}[label=(\roman*)]
\item $(F_1, \ldots, F_n)$ is auto-calibrated;
\item for $i \in \{1, \ldots, n-1\}$, 
\begin{equation*}
\mathbb{E}[R(F_i, F_{i+1})|F_1, \ldots, F_i] = I,
\end{equation*}
and 
\begin{equation*}
\mathbb{E}[R(F_n, \phi(\delta_Y))|F_1, \ldots, F_n] = I.
\end{equation*}
\end{enumerate}
\end{proposition}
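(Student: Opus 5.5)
The plan is to reduce Proposition \ref{auto-calibrated_revisions} to the martingale characterisation of auto-calibration, and then apply Proposition \ref{R} one step at a time. By Proposition \ref{auto-calibration_equivalence}, (i) holds if and only if $(F_1, \ldots, F_n, \phi(\delta_Y))$ is a martingale. Here each $F_i$ is identified with $\phi^{-1}(F_i)$, and $\delta_Y$ with the random distribution function $\phi(\delta_Y)$. Write $F_{n+1} = \phi(\delta_Y)$. By Definition \ref{martingale}, the martingale property is then the conjunction, over $i \in \{1, \ldots, n\}$, of
\begin{equation*}
  F_i = \mathbb{E}[F_{i+1}|F_1, \ldots, F_i].
\end{equation*}
Since $\phi$ is a measurable bijection with measurable inverse, conditioning on $\mu_1, \ldots, \mu_i$ or on $F_1, \ldots, F_i$ gives the same $\sigma$-algebra. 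Also, the two notions of conditional expectation, for random measures and for random distribution functions, agree under $\phi$. This holds because both are characterised by their evaluations: $\mathbb{E}[\mu|\Psi]((-\infty,y]) = \mathbb{E}[\mu((-\infty,y])|\Psi]$. So the translation between the measure formulation and the distribution function formulation is harmless.

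Next, fix $i \in \{1, \ldots, n\}$ and apply Proposition \ref{R} with $F = F_i$, $G = F_{i+1}$ and $\Psi = \sigma(F_1, \ldots, F_i)$. Since $F_i$ is trivially $\Psi$-measurable, Proposition \ref{R} gives
\begin{equation*}
  F_i = \mathbb{E}[F_{i+1}|F_1, \ldots, F_i] \iff \mathbb{E}[R(F_i, F_{i+1})|F_1, \ldots, F_i] = I.
\end{equation*}
For $i \le n-1$ these are exactly the first family of conditions in (ii). For $i = n$, with $F_{n+1} = \phi(\delta_Y)$, it is exactly the last condition in (ii). Taking the conjunction over $i$ gives (i) $\iff$ (ii).

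The only point needing care, which I expect to be the main (mild) obstacle, is checking that Proposition \ref{R} applies when $G = \phi(\delta_Y)$. This requires $\phi(\delta_Y)$ to be a genuine random distribution function, that is, measurable. That holds because $y \mapsto \delta_y$ is measurable: $\delta_y((-\infty, x]) = \mathbbm{1}(y \le x)$ is measurable in $y$. A second check is that Proposition \ref{auto-calibration_equivalence} is stated for random probability measures on a Borel space, and $(\mathbb{R}, \mathcal{B}(\mathbb{R}))$ is Borel, so it applies directly. No further computation should be needed.
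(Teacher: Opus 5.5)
Your proposal is correct and follows essentially the same route as the paper's proof. Both reduce (i) via Proposition \ref{auto-calibration_equivalence} to the martingale property of $(F_1,\ldots,F_n,\phi(\delta_Y))$, then apply Proposition \ref{R} step by step with $\Psi=\sigma(F_1,\ldots,F_i)$; your checks on the measurability of $\phi(\delta_Y)$ and on how the two conditional expectations correspond under $\phi$ are details the paper leaves implicit.
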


\begin{proof}
  By Proposition \ref{auto-calibration_equivalence}, $(F_1, \ldots, F_n)$ is auto-calibrated if and only if
\begin{equation*}
  (F_1, \ldots, F_n, \phi(\delta_Y))
\end{equation*}
is a martingale.

  By Proposition \ref{R}, for each $i \in \{1, \ldots, n-1\}$, $\mathbb{E}[F_{i+1}|F_1, \ldots, F_i] = F_i$ if and only if \begin{equation*}
\mathbb{E}[R(F_i, F_{i+1})|F_1, \ldots, F_i] = I.
\end{equation*}

  Also, $\mathbb{E}[\phi(\delta_Y)|F_1, \ldots, F_n] = F_n$ if and only if
\begin{equation*}
\mathbb{E}[R(F_n, \phi(\delta_Y))|F_1, \ldots, F_n] = I.
\end{equation*}
\end{proof}

Since $R(F, G)$ is the important part of the CDF of the synthetic PIT value between $F$ and $G$, a random draw from it will have the same distribution as a synthetic PIT value between $F$ and $G$. Proposition \ref{R} then suggests the following Lemma.

\begin{lemma}
  Let $F$, $G$ be random distribution functions and let $\Psi \subset \mathcal{F}$ be a $\sigma$-algebra. Let $U$ be a random variable with distribution $\mathcal{U}([0,1])$ and independent of $\sigma(F, G, \Psi)$.

  Then the following are equivalent:
\begin{enumerate}[label=(\roman*)]
  \item $\mathbb{E}[R(F, G)|\Psi] = I$;
  \item $R(F, G)^{-1}(U)$ has distribution $\mathcal{U}([0,1])$ and is independent of $\Psi$.
\end{enumerate}
\end{lemma}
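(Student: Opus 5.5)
Write $R = R(F,G)$; it is a random non-decreasing right-continuous function $(0,1) \to [0,1]$. Define its generalized inverse by $R^{-1}(u) = \inf\{t \in (0,1) \mid R(t) \geq u\}$, with the convention $\inf\emptyset = 1$. The plan mimics the proof of Proposition \ref{R}: compute the conditional CDF of $W = R^{-1}(U)$ given $\Psi$, and show it equals $\mathbb{E}[R|\Psi](t)$ for $t\in(0,1)$. Once this is done, both (i) and (ii) say that $\mathbb{P}(W \leq t|\Psi) = t$ for all $t \in (0,1)$.

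The first step is the inverse property
\begin{equation*}
R^{-1}(u) \leq t \iff u \leq R(t), \qquad t,u \in (0,1).
\end{equation*}
This follows exactly as for quantile functions, using that $R$ is non-decreasing and right-continuous on $(0,1)$. For the backward direction, $u \leq R(t)$ puts $t$ in the set, so the infimum is at most $t$. For the forward direction, if $R^{-1}(u) \leq t$, then either the infimum is attained at some $s \le t$, or there are points $s_k \downarrow R^{-1}(u)$ with $R(s_k) \ge u$. In the second case, right-continuity gives $R(R^{-1}(u)) \geq u$ provided $R^{-1}(u) < 1$. Monotonicity then gives $R(t) \ge u$.

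The point $t=1$ needs attention, since $R^{-1}$ might take the value $1$, or approach $0$ in the limit. We only need to test $t \in (0,1)$: a $[0,1]$-valued variable $W$ with $\mathbb{P}(W \le t|\Psi)=t$ for all $t\in(0,1)$ is uniform and independent of $\Psi$. Measurability of $W$ in $(R,U)$ follows from the displayed equivalence, in the same way as for $F^{-1}(t)$ earlier in the paper.

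Next, since $U$ is independent of $\sigma(F,G,\Psi)$ and $R$ is $\sigma(F,G)$-measurable, Theorem \ref{Disintegration} (or simply Fubini conditioned on $\sigma(F,G,\Psi)$) gives, for $t\in(0,1)$,
\begin{equation*}
\mathbb{P}(W \leq t | \Psi) = \mathbb{E}\left[\mathbb{P}(U \leq R(t)|F,G,\Psi)\middle|\Psi\right] = \mathbb{E}[R(t)|\Psi] = \mathbb{E}[R|\Psi](t).
\end{equation*}
Both implications now follow.
\begin{itemize}
\item For (i)$\Rightarrow$(ii): the displayed identity gives $\mathbb{P}(W \le t|\Psi)=t$, and hence $\mathbb{P}(W\le t, A) = t\,\mathbb{P}(A)$ for every $A\in\Psi$. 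The sets $\{W\le t\}$ form a $\pi$-system generating $\sigma(W)$, so $W\sim\mathcal{U}([0,1])$ and $W$ is independent of $\Psi$.
\item For (ii)$\Rightarrow$(i): independence gives $\mathbb{P}(W\le t|\Psi)=\mathbb{P}(W \le t)=t$ almost surely for each $t$. Hence $\mathbb{E}[R|\Psi](t)=t$ a.s. for each $t$. Taking rational $t$ and using right-continuity of the version $\mathbb{E}[R|\Psi]$, we get $\mathbb{E}[R|\Psi]=I$.
\end{itemize}

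The main obstacle is the boundary behaviour in the inverse property. If $R$ stays below $u$ on all of $(0,1)$, then $R^{-1}(u)=1$, which is consistent with the equivalence since $u\le R(t)$ then fails for every $t<1$. The other case to check is when $R^{-1}(u)$ equals $0$ as an infimum that is not attained in $(0,1)$. The right-continuity argument must be restricted to infima lying in $(0,1)$, and these edge cases are handled separately by the convention above.
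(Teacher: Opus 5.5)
Your proposal is correct and is the paper's own argument. Both compute $\mathbb{P}(R(F,G)^{-1}(U)\le t\mid\Psi)$ by conditioning on $\sigma(F,G,\Psi)$, then use $R^{-1}(u)\le t \iff u\le R(t)$ and the independence of $U$ to obtain $\mathbb{E}[R(F,G)(t)\mid\Psi]$. Your extra work on the boundary convention for $R^{-1}$, the $\pi$-system step, and the passage through rational $t$ only fills in details the paper leaves implicit.
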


\begin{proof}
For all $t \in (0,1)$,
\begin{eqnarray*}
  \mathbb{P}(R(F, G)^{-1}(U) \leq t|\Psi) & = & \mathbb{E}[\mathbb{P}(R(F, G)^{-1}(U) \leq t|F, G, \Psi)|\Psi] \\
                                          & = & \mathbb{E}[\mathbb{P}(U \leq R(F, G)(t)|F, G, \Psi)|\Psi] \\
                                          & = & \mathbb{E}[R(F, G)(t)|\Psi].
\end{eqnarray*}
\end{proof}

We can then rewrite Proposition \ref{auto-calibrated_revisions} to mirror Proposition \ref{syn_PIT_values}.

\begin{corollary}
  Let $Y$ be an $\mathbb{R}$-valued random variable, and let $F_1$, \ldots, $F_n$ be probability forecasts for $Y$. Let $U_1$, \ldots, $U_n$ be i.i.d. $\mathcal{U}([0,1])$ random variables independent of $\sigma(F_1, \ldots, F_n, Y)$.

  For each $i \in \{1, \ldots, n-1\}$, let $Z_i = R(F_i, F_{i+1})^{-1}(U_i)$. Let $Z_n = R(F_n, \phi(\delta_Y))^{-1}(U_n)$.

  Then the following are equivalent:
\begin{enumerate}[label=(\roman*)]
  \item $(F_1, \ldots, F_n)$ is auto-calibrated;
  \item for each $i \in \{1, \ldots, n\}$, $Z_i$ has distribution $\mathcal{U}([0,1])$ and is independent of $\sigma(F_1, \ldots, F_i)$.
\end{enumerate}

Consequently, {\normalfont (i)} implies $Z_1$, \ldots, $Z_n$ are i.i.d. $\mathcal{U}([0,1])$.
\end{corollary}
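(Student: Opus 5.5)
The plan is to combine Proposition \ref{auto-calibrated_revisions} with the preceding Lemma, applied separately to each revision. This mirrors the proof of Proposition \ref{syn_PIT_values}.

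\emph{Step 1: (i) $\iff$ (ii).} By Proposition \ref{auto-calibrated_revisions}, $(F_1, \ldots, F_n)$ is auto-calibrated if and only if $\mathbb{E}[R(F_i, F_{i+1})|F_1, \ldots, F_i] = I$ for each $i \in \{1,\ldots,n-1\}$ and $\mathbb{E}[R(F_n, \phi(\delta_Y))|F_1, \ldots, F_n] = I$.

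Fix $i \le n-1$ and apply the preceding Lemma with $F = F_i$, $G = F_{i+1}$, $\Psi = \sigma(F_1, \ldots, F_i)$ and $U = U_i$. The hypothesis holds because $U_i$ is independent of $\sigma(F_1, \ldots, F_n, Y)$, which contains $\sigma(F_i, F_{i+1}, \Psi)$. The Lemma then says the $i$-th condition is equivalent to $Z_i$ being $\mathcal{U}([0,1])$ and independent of $\sigma(F_1, \ldots, F_i)$.

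For $i = n$, apply the Lemma with $F = F_n$, $G = \phi(\delta_Y)$, $\Psi = \sigma(F_1, \ldots, F_n)$ and $U = U_n$. Here $\phi(\delta_Y)$ is a measurable function of $Y$ (its value at $y$ is $\mathbbm{1}(Y \le y)$), so $U_n$ is again independent of $\sigma(F_n, \phi(\delta_Y), \Psi)$.

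Combining the $n$ equivalences gives (i) $\iff$ (ii).

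\emph{Step 2: the consequence.} Assume (i), so (ii) holds. I will show by induction that $Z_1, \ldots, Z_n$ are i.i.d. $\mathcal{U}([0,1])$.

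The Lemma, and hence (ii), only gives independence of $Z_i$ from $\sigma(F_1, \ldots, F_i)$. To conclude that $Z_i$ is independent of $\sigma(Z_1, \ldots, Z_{i-1})$, I need a slightly stronger statement: $Z_i$ is $\mathcal{U}([0,1])$ and independent of
\begin{equation*}
\Psi_i = \sigma(F_1, \ldots, F_i, U_1, \ldots, U_{i-1}),
\end{equation*}
which contains $\sigma(Z_1, \ldots, Z_{i-1})$. This $\Psi_i$ is legitimate in the Lemma, because $U_i$ is independent of $\sigma(F_1, \ldots, F_n, Y, U_1, \ldots, U_{i-1})$ by the joint independence assumptions.

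So I apply the Lemma with $\Psi_i$ in place of $\sigma(F_1, \ldots, F_i)$. It remains to check $\mathbb{E}[R(F_i, F_{i+1})|\Psi_i] = I$. Since $(U_1, \ldots, U_{i-1})$ is independent of $\sigma(F_1, \ldots, F_n, Y)$, the pair $(F_i, F_{i+1})$ is conditionally independent of $(U_1, \ldots, U_{i-1})$ given $F_1, \ldots, F_i$. Hence
\begin{equation*}
\mathbb{E}[R(F_i, F_{i+1})|\Psi_i] = \mathbb{E}[R(F_i, F_{i+1})|F_1, \ldots, F_i] = I,
\end{equation*}
and the same argument with $\phi(\delta_Y)$ in place of $F_{i+1}$ handles $i = n$. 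Each $Z_i$ is then uniform and independent of its predecessors, which gives the i.i.d. conclusion.

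\textbf{Main obstacle.} The main obstacle is Step 2: independence from the earlier $U$'s does not follow from (ii) as stated. It has to be recovered by enlarging the $\sigma$-algebra, using Theorem 8.9 of \cite{Kll21} to drop the independent auxiliary variables from the conditioning. Step 1 is otherwise routine bookkeeping.
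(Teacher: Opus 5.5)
Your proof is correct and follows the route the paper indicates. The paper gives no separate proof; it only remarks that Proposition~\ref{auto-calibrated_revisions}, combined with the preceding Lemma applied revision by revision, mirrors Proposition~\ref{syn_PIT_values}. Your Step~2 supplies a detail the paper glosses over in the analogous ``consequently'' step of Proposition~\ref{PIT_values}: the earlier $Z_j$ depend on the auxiliary uniforms, so $\sigma(Z_1,\ldots,Z_{i-1})$ need not lie in $\sigma(F_1,\ldots,F_i)$. Your enlargement of $\Psi$ handles this correctly. A slightly shorter fix: $\sigma(U_1,\ldots,U_{i-1})$ is independent of $\sigma(F_1,\ldots,F_n,Y,U_i)\supseteq\sigma(F_1,\ldots,F_i,Z_i)$, so a $\pi$-system argument upgrades the independence in (ii) to independence of $Z_i$ from $\sigma(F_1,\ldots,F_i,U_1,\ldots,U_{i-1})$ without re-applying the Lemma.
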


This allows us to test whether a fixed-event probability forecast sequence is auto-calibrated using only the sequence of forecast revisions. It may be possible instead to test the conditional structure of the forecast revisions described in Proposition \ref{auto-calibrated_revisions} directly, without using auxiliary variables like $U_1$, \ldots, $U_n$, but we leave this for future research.

\section{Examples}\label{examples}
We illustrate with three examples the behaviour of synthetic PIT values and forecast revisions when a fixed-event probability forecast sequence is not auto-calibrated.

\begin{example}
Let $A_2$, \ldots, $A_{n+1}$ be i.i.d. $N(0,1)$ random variables. Let
\begin{equation*}
  Y = \sum_{i=2}^{n+1} A_i.
\end{equation*}

Let $Q_1$, \ldots, $Q_n$ be defined by the following:
\begin{equation*}
  Q_i = \sum_{j=2}^{i} \left(1 - \frac{1}{2^{i+1-j}}\right) A_j,
\end{equation*}
so $Q_1 = 0$, and then consider a sequence of forecasts $F_1$, \ldots, $F_n$ defined by, for all $y \in \mathbb{R}$,
\begin{equation*}
  F_i(y) = \Phi\left(\frac{y - Q_i}{\sqrt{n+1-i}}\right),
\end{equation*}
where $\Phi$ is the standard Normal CDF.

Note
\begin{eqnarray*}
  Q_i & = & \frac{1}{2} \sum_{j=2}^{i-1} \left(1 - \frac{1}{2^{i-j}}\right) A_j + \frac{1}{2} \sum_{j=2}^i A_j \\
      & = & \frac{1}{2} Q_{i-1} + \frac{1}{2} \mathbb{E}[Y|A_2, \ldots, A_i],
\end{eqnarray*}
so we could imagine that when the forecaster needs to produce a probability forecast $F_i$, they are aware that the best forecast they could produce would be centered on $\mathbb{E}[Y|A_2, \ldots, A_i]$ but are reluctant to change too much from their previous forecast $F_{i-1}$, and so choose to compromise between the two. Nordhaus discusses the tendency for forecasts to be `smoothed' like this in \cite{nrd87}.

Let $U_2$, \ldots, $U_n$ be i.i.d. $\mathcal{U}([0,1])$ independent of $\sigma(A_2, \ldots, A_{n+1})$. Then for each $i \in \{1, \ldots, n-1\}$, $F_i$ is continuous, so the synthetic PIT value between $F_i$ and $F_{i+1}$ is
\begin{eqnarray*}
  F_i(F_{i+1}^{-1}(U_{i+1})) & = & F_i\left(Q_{i+1} + \sqrt{n-i}\,\Phi^{-1}(U_{i+1})\right) \\
                             & = & \Phi\left(\frac{Q_{i+1} - Q_i + \sqrt{n-i} \,\Phi^{-1}(U_{i+1})}{\sqrt{n+1-i}} \right) \\
                             & = & \Phi\left(\frac{\sum_{j=2}^i \frac{1}{2^{i+2-j}} A_j + \frac{1}{2} A_{i+1} + \sqrt{n-i} \,\Phi^{-1}(U_{i+1})}{\sqrt{n+1-i}}\right).
\end{eqnarray*}

Conditional on $A_2$, \ldots, $A_i$, we have
\begin{equation*}
  \frac{\sum_{j=2}^i \frac{1}{2^{i+2-j}} A_j + \frac{1}{2} A_{i+1} + \sqrt{n-i} \,\Phi^{-1}(U_{i+1})}{\sqrt{n+1-i}} \sim N\left(\frac{\sum_{j=2}^i \frac{1}{2^{i+2-j}} A_j}{\sqrt{n+1-i}}, \frac{4n - 4i + 1}{4n - 4i + 4}\right).
\end{equation*}

Using the identity
\begin{equation}\label{normal_identity}
  \int_\mathbb{R} \Phi(z) \,d\Phi\left(\frac{z - \mu}{\sigma}\right) = \Phi\left(\frac{\mu}{\sqrt{1+\sigma^2}}\right),
\end{equation}
we can see that
\begin{equation*}
  \mathbb{E}[F_i(F_{i+1}^{-1}(U_{i+1}))|A_2, \ldots, A_i] = \frac{1}{2} \qquad \text{only if} \qquad \sum_{j=2}^i \frac{1}{2^{i+2-j}} A_j = 0.
\end{equation*}

Note also $\sigma(F_1, \ldots, F_i) = \sigma(A_2, \ldots, A_i)$. Thus the conditional distribution of the synthetic PIT value between $F_i$ and $F_{i+1}$ given $F_1$, \ldots, $F_i$ is not always $\mathcal{U}([0,1])$.
\end{example}

\begin{example}
Let $A_2$, \ldots, $A_{n+1}$ be i.i.d. $N(0,1)$ random variables. Let
\begin{equation*}
  Y = \sum_{i=2}^{n+1} A_i.
\end{equation*}

Suppose a forecaster mistakenly believes $A_2$, \ldots, $A_{n+1}$ are i.i.d. $N(0,4)$, and so for $i \in \{1, \ldots, n\}$ they produce the forecast $F_i$ for $Y$ given by, for $y \in \mathbb{R}$,
\begin{equation*}
  F_i(y) = \Phi\left(\frac{y - A_2 - \ldots - A_i}{2 \sqrt{n+1-i}}\right),
\end{equation*}
where $\Phi$ is the standard Normal CDF.

Let $U_2$, \ldots, $U_n$ be i.i.d. $\mathcal{U}([0,1])$ independent of $A_2$, \ldots, $A_{n+1}$. Then for $i \in \{1, \ldots, n-1\}$, $F_i$ is continuous, so the synthetic PIT value between $F_i$ and $F_{i+1}$ is
\begin{eqnarray*}
  F_i(F_{i+1}^{-1}(U_{i+1})) & = & F_i\left(A_2 + \ldots + A_{i+1} + 2 \sqrt{n-i} \,\Phi^{-1}(U_{i+1})\right) \\
                             & = & \Phi\left(\frac{A_{i+1} + 2 \sqrt{n-i} \,\Phi^{-1}(U_{i+1})}{2 \sqrt{n+1-i}}\right).
\end{eqnarray*}

Since $A_{i+1}$ and $\Phi^{-1}(U_{i+1})$ are i.i.d. $N(0,1)$,
\begin{equation*}
  \frac{A_{i+1} + 2\sqrt{n-i} \,\Phi^{-1}(U_{i+1})}{2 \sqrt{n+1-i}} \sim N\left(0, \frac{4n - 4i + 1}{4n - 4i + 4}\right),
\end{equation*}
so $F_i(F_{i+1}^{-1}(U_{i+1})) \nsim \mathcal{U}([0,1])$. Note the synthetic PIT values are independent of each other in this case.

The forecast $F_i$ is a probability distribution with mean $A_2 + \ldots + A_i$, which is equal to $\mathbb{E}[Y|A_2, \ldots, A_i]$, so the sequence of forecast means is efficient as a point forecast sequence. This means that if we had followed existing practice by extracting the forecast means and testing whether their revisions were uncorrelated and had expectation $0$ (as described in \cite{nrd87}), then we would not have been able to detect the forecast miscalibration.

Since the forecast $F_i$ is continuous and strictly increasing, and the forecast $F_{i+1}$ is continuous, the revision $R(F_i, F_{i+1})$ is given by, for all $t \in \mathbb{R}$, $R(F_i, F_{i+1})(t) = F_{i+1}(F_i^{-1}(t))$. In fact,
\begin{equation*}
  R(F_i, F_{i+1})(t) = \Phi\left(\frac{2\sqrt{n+1-i}\, \Phi^{-1}(t) - A_{i+1}}{2\sqrt{n-i}}\right),
\end{equation*}
and so, using identity (\ref{normal_identity}) again,
\begin{equation*}
  \mathbb{E}[R(F_i, F_{i+1})(t)] = \Phi\left(\frac{2\sqrt{n+1-i}}{\sqrt{4n-4i+1}}\, \Phi^{-1}(t)\right).
\end{equation*}
Thus $\mathbb{E}[R(F_i, F_{i+1})] \neq I$, as expected.
\end{example}

\begin{example}
  We construct random distribution functions $F_1$, $F_2$, $F_3$ such that the synthetic PIT values $Z(F_1, F_2^{-1}(U_2), V_1)$ and $Z(F_2, F_3^{-1}(U_3), V_2)$ are i.i.d. $\mathcal{U}([0,1])$, but the synthetic PIT value $Z(F_1, F_3^{-1}(U_3), V_1)$ does not have distribution $\mathcal{U}([0,1])$. It may be surprising that this is possible, as there is no analogous possibility for rolling-event probability forecast sequences and ordinary PIT values. This example shows that there are fixed-event forecast sequences which are not auto-calibrated but whose miscalibration we cannot detect by calculating the synthetic PIT values, but also that for such sequences it may be possible to detect the miscalibration in a subsequence of the forecasts by calculating the synthetic PIT values for that subsequence.

Let $A$, $B$, $X$ and $Y$ be independent, with $A, B \sim \text{Bernoulli}\left(\frac{1}{2}\right)$ and $X, Y \sim \mathcal{U}([0,1])$.

Let $F_3$ be the random distribution function given by
\begin{equation*}
  F_3(y) = \begin{cases}
    0 & y < Y \\
    1 & y \geq Y.
  \end{cases}
\end{equation*}

Let $F_2$ be the random distribution function defined as follows. Note $\frac{5}{3}t - t^2 + \frac{1}{3}t^3$ is a continuous and strictly increasing function taking values from $0$ to $1$ on $[0,1]$. Let $G$ be its inverse function on $[0,1]$, which is also a continuous and strictly increasing function taking values from $0$ to $1$. If $B = 1$ and $X \in (0,1)$, let $F_2$ be given by:
\begin{equation*}
  F_2(y) = \begin{cases}
    0             & y < 0 \\
    \frac{3}{2X}y & 0 \leq y < \frac{2 X^2}{3} \\
    X             & \frac{2 X^2}{3} \leq y < X \\
    y             & X \leq y < 1 \\
    1             & 1 \leq y.
  \end{cases}
\end{equation*}
Otherwise, let $F_2$ be given by:
\begin{equation*}
  F_2(y) = \begin{cases}
    0    & y < 0 \\
    G(y) & 0 \leq y < 1 \\
    1    & 1 \leq y.
  \end{cases}
\end{equation*}

Since $F_2$ is continuous, the synthetic PIT value between $F_2$ and $F_3$ is $F_2(F_3^{-1}(U_3))$, where $U_3$ has distribution $\mathcal{U}([0,1])$ and is independent of $\sigma(A, B, X, Y)$. If $B = 1$ and $X \in (0,1)$,

\begin{equation*}
  \mathbb{P}(F_2(F_3^{-1}(U_3)) \leq t|B, X) = \begin{cases}
    0             & t < 0 \\
    \frac{2X}{3}t & 0 \leq t < X \\
    t             & X \leq t < 1 \\
    1             & 1 \leq t.
  \end{cases}
\end{equation*}

Otherwise,
\begin{equation*}
  \mathbb{P}(F_2(F_3^{-1}(U_3)) \leq t|B, X) = \begin{cases}
    0                                   & t < 0 \\
    \frac{5}{3}t - t^2 + \frac{1}{3}t^3 & 0 \leq t < 1 \\
    1                                   & 1 \leq t.
  \end{cases}
\end{equation*}

Then
\begin{eqnarray*}
  \mathbb{P}(F_2(F_3^{-1}(U_3)) \leq t) & = & \mathbb{E}[\mathbb{P}(F_2(F_3^{-1}(U_3)) \leq t|B, X)] \\
                                        & = & \begin{cases}
                                                0 & t < 0 \\
                                                t & 0 \leq t < 1 \\
                                                1 & 1 \leq t,
                                              \end{cases}
\end{eqnarray*}
so $F_2(F_3^{-1}(U_3))$ has distribution $\mathcal{U}([0,1])$.

Now let $F_1$ be the random distribution function defined as follows. If $A = 0$, $B = 1$ and $X \in (0,1)$, let $F_1$ be given by:
\begin{equation*}
  F_1(y) = \begin{cases}
    0                      & y < 0 \\
    \frac{1}{X}y           & 0 \leq y < \frac{2 X^2}{3} \\
    \frac{y+2X-2X^2}{3-2X} & \frac{2 X^2}{3} \leq y < X \\
    y                      & X \leq y < 1 \\
    1                      & 1 \leq y.
  \end{cases}
\end{equation*}
If $A = 1$, $B = 1$ and $X \in (0,1)$, let $F_1$ be given by:
\begin{equation*}
  F_1(y) = \begin{cases}
    0                           & y < 0 \\
    \frac{3}{X}y                & 0 \leq y < \frac{2X^2}{9} \\
    \frac{3}{4X}y + \frac{X}{2} & \frac{2X^2}{9} \leq y < \frac{2X^2}{3} \\
    X                           & \frac{2X^2}{3} \leq y < X \\
    y                           & X \leq y < 1 \\
    1                           & 1 \leq y.
  \end{cases}
\end{equation*}
Otherwise, let $F_1$ be given by:
\begin{equation*}
  F_1(y) = \begin{cases}
    0    & y < 0 \\
    G(y) & 0 \leq y < 1 \\
    1    & 1 \leq y.
  \end{cases}
\end{equation*}

Since $F_1$ is continuous, the synthetic PIT value between $F_1$ and $F_2$ is $F_1(F_2^{-1}(U_2))$, where $U_2$ has distribution $\mathcal{U}([0,1])$ and is independent of $\sigma(A, B, X, Y, U_3)$.

If $A = 0$, $B = 1$ and $X \in (0,1)$ then
\begin{equation*}
  \mathbb{P}(F_1(F_2^{-1}(U_2)) \leq t|A, B, X) = \begin{cases}
    0            & t < 0 \\
    \frac{3}{2}t & 0 \leq t < \frac{2X}{3} \\
    X            & \frac{2X}{3} \leq t < X \\
    t            & X \leq t < 1 \\
    1            & 1 \leq t.
  \end{cases}
\end{equation*}
If $A = 1$, $B = 1$ and $X \in (0,1)$ then
\begin{equation*}
  \mathbb{P}(F_1(F_2^{-1}(U_2)) \leq t|A, B, X) = \begin{cases}
    0            & t < 0 \\
    \frac{1}{2}t & 0 \leq t < \frac{2X}{3} \\
    2t - X       & \frac{2X}{3} \leq t < X \\
    t            & X \leq t < 1 \\
    1            & 1 \leq t.
  \end{cases}
\end{equation*}
Otherwise,
\begin{equation*}
  \mathbb{P}(F_1(F_2^{-1}(U_2)) \leq t|A, B, X) = \begin{cases}
    0 & t < 0 \\
    t & 0 \leq t < 1 \\
    1 & 1 \leq t.
  \end{cases}
\end{equation*}

Therefore,
\begin{eqnarray*}
  \mathbb{P}(F_1(F_2^{-1}(U_2)) \leq t|B, X) & = & \mathbb{E}[\mathbb{P}(F_1(F_2^{-1}(U_2)) \leq t|A, B, X)|B, X] \\
                                             & = & \begin{cases}
                                               0 & t < 0 \\
                                               t & 0 \leq t < 1 \\
                                               1 & 1 \leq t,
                                             \end{cases}
\end{eqnarray*}
so $F_1(F_2^{-1}(U_2))$ has distribution $\mathcal{U}([0,1])$ and is independent of $\sigma(B,X)$. Since it is clearly also independent of $\sigma(Y,U_3)$, it is independent of $F_2(F_3^{-1}(U_3))$.

Finally, the synthetic PIT value between $F_1$ and $F_3$ is $F_1(F_3^{-1}(U_3))$. If $A = 0$, $B = 1$ and $X \in (0,1)$ then
\begin{equation*}
  \mathbb{P}(F_1(F_3^{-1}(U_3)) \leq t|A, B, X) = \begin{cases}
    0                    & t < 0 \\
    Xt                   & 0 \leq t < \frac{2X}{3} \\
    3t - 2Xt - 2X + 2X^2 & \frac{2X}{3} \leq t < X \\
    t                    & X \leq t < 1 \\
    1                    & 1 \leq t.
  \end{cases}
\end{equation*}
If $A = 1$, $B = 1$ and $X \in (0,1)$ then
\begin{equation*}
  \mathbb{P}(F_1(F_3^{-1}(U_3)) \leq t|A, B, X) = \begin{cases}
    0                              & t < 0 \\
    \frac{X}{3}t                   & 0 \leq t < \frac{2X}{3} \\
    \frac{4X}{3}t - \frac{2X^2}{3} & \frac{2X}{3} \leq t < X \\
    t                              & X \leq t < 1 \\
    1                              & 1 \leq t.
  \end{cases}
\end{equation*}
Otherwise,
\begin{equation*}
  \mathbb{P}(F_1(F_3^{-1}(U_3)) \leq t|A, B, X) = \begin{cases}
    0                                   & t < 0 \\
    \frac{5}{3}t - t^2 + \frac{1}{3}t^3 & 0 \leq t < 1 \\
    1                                   & 1 \leq t.
  \end{cases}
\end{equation*}

Then for $t \in \left(\frac{2}{3},1\right)$,
\begin{eqnarray*}
  \mathbb{P}(F_1(F_3^{-1}(U_3)) \leq t) & = & \mathbb{E}[\mathbb{P}(F_1(F_3^{-1}(U_3)) \leq t|A, B, X)] \\
                                        & = & -\frac{5}{36} + \frac{3}{2}t - \frac{1}{2}t^2 + \frac{5}{36}t^3 \\
                                        & \neq & t,
\end{eqnarray*}
so $F_1(F_3^{-1}(U_3)) \nsim \mathcal{U}([0,1])$.
\end{example}

\section{Discussion}\label{discussion}
We have presented a method for testing whether a fixed-event probability forecast sequence is calibrated. In many ways our definition of auto-calibration and the consequences of it are analogous to Nordhaus' definition of weak efficiency for a fixed-event point forecast sequence in \cite{nrd87}, and the consequences of that. A key difference is that we have presented a fixed-size statistical test for auto-calibration which requires no further assumptions on the forecasts, but as far as we are aware such a test does not exist for weak efficiency. This difference seems reasonable: in producing probability forecasts rather than point forecasts, the forecaster has provided us with much more information about their beliefs.

There are other issues to consider before applying our method to test real-world forecast sequences: in particular, the forecasts may not initially be presented as probability measures, and so we may need to decide how best to interpret them as such, and we will need to choose a method for testing whether the synthetic PIT values are i.i.d. $\mathcal{U}([0,1])$. We do not address these issues here.

Proposition \ref{syn_PIT_values} in fact shows that under auto-calibration each synthetic PIT value is independent not only of the other synthetic PIT values, but also of the forecasts earlier in the sequence. Similarly, Proposition \ref{PIT_values} shows that for rolling-event probability forecast sequences, auto-calibration implies each PIT value is independent not only of the other PIT values but also of the earlier forecasts and observations. It may be preferable to test the stronger independence properties of synthetic PIT values, perhaps using regression trees as described in Chapter 3 of \cite{Mds23}. Also, the null hypothesis could be modified to require each forecast to be conditionally auto-calibrated given a larger information set representing some of the data the forecaster had access to when their forecast was made, which we would expect them to have incorporated into their forecast. We could then also test whether each synthetic PIT value is independent of the corresponding one of these information sets.

In a test of auto-calibration using synthetic PIT values, the null hypothesis of auto-calibration requires each forecast to be related to the distribution of the observation. However, the test should be thought of as primarily assessing the internal consistency of the forecast sequence, meaning whether each adjacent pair of forecasts in the sequence shows the forecaster updating their prediction coherently, rather than whether the forecasts are all aligned with the observation. To see this, consider an auto-calibrated fixed-event sequence of forecasts made for one observation, and imagine testing whether it is auto-calibrated as a fixed-event probability forecast sequence for a different, completely unrelated observation. Only the last synthetic PIT value in the sequence---or rather, the ordinary PIT value for the last forecast in the sequence---is affected by the modification to the observation, so the synthetic PIT values earlier in the sequence are still i.i.d. $\mathcal{U}([0,1])$. Any method for testing whether the full sequence of synthetic PIT values is i.i.d. $\mathcal{U}([0,1])$ is then unlikely to lead to a rejection of the null hypothesis, even though presumably none of the forecasts are close to being calibrated for the new observation.

The consequences of auto-calibration we have developed for a fixed-event probability forecast sequence are really consequences of the martingale property, and we could use them to test whether a sequence of random probability measures is a martingale with synthetic PIT values. In particular, we could state a proposition similar to Proposition \ref{syn_PIT_values} but giving an equivalence with the martingale property rather than with auto-calibration, and its proof would be essentially the same. Given a fixed-event probability forecast sequence, we could ignore the observation entirely and test whether the forecast sequence is a martingale. If we rejected that null hypothesis we would then have found that the forecast sequence is not auto-calibrated for any observation.

\begin{appendix}
\section*{Additional proofs}\label{appn}

\begin{proof}[Proof of Proposition \ref{auto-calibration_equivalence}]
  $\text{(i)} \Rightarrow \text{(ii)}$:\, For $i \in \{1, \ldots, n\}$, $\mu_i$ is $\Psi_i$-measurable. Then, since the $\Psi$s form a filtration, for $i \in \{1,\ldots,n\}$, $\sigma(\mu_1,...\mu_i) \subset \Psi_i$. For all $A \in \mathcal{S}$, by the Tower Law:
\begin{eqnarray*}
  \mathbb{E}[\mathbbm{1}(Y \in A)|\mu_1, \ldots, \mu_i] & = & \mathbb{E}[\mathbb{E}[\mathbbm{1}(Y \in A)|\Psi_i]|\mu_1, \ldots, \mu_i] \\
                                                        & = & \mathbb{E}[\mu_i(A)|\mu_1, \ldots, \mu_i] \\
                                                        & = & \mu_i(A).
\end{eqnarray*}
  $\text{(ii)} \Rightarrow \text{(i)}$:\, For each $i \in \{1, \ldots, n\}$, let $\Psi_i = \sigma(\mu_1, \ldots, \mu_i)$. \\
  $\text{(ii)} \Rightarrow \text{(iii)}$:\, For each $i \in \{1, \ldots, n-1\}$ and for all $A \in \mathcal{S}$,
\begin{eqnarray*}
  \mu_i(A) & = & \mathbb{E}[\mathbbm{1}(Y \in A)|\mu_1, \ldots, \mu_i] \\
           & = & \mathbb{E}[\mathbb{E}[\mathbbm{1}(Y \in A)|\mu_1, \ldots, \mu_{i+1}]|\mu_1, \ldots, \mu_i] \\
           & = & \mathbb{E}[\mu_{i+1}(A)|\mu_1, \ldots, \mu_i].
\end{eqnarray*}
  $\text{(iii)} \Rightarrow \text{(ii)}$:\, We show $\mu_i = \mathcal{L}(Y|\mu_1, \ldots, \mu_i)$ by induction from $i = n$ to $i = 1$. The base case is trivial. For $i \in \{1, \ldots, n-1\}$, assume $\mu_{i+1} = \mathcal{L}(Y|\mu_1, \ldots, \mu_{i+1})$. Then for all $A \in \mathcal{S}$,
\begin{eqnarray*}
  \mu_i(A) & = & \mathbb{E}[\mu_{i+1}(A)|\mu_1, \ldots, \mu_i] \\
           & = & \mathbb{E}[\mathbb{E}[\mathbbm{1}(Y \in A)|\mu_1, \ldots, \mu_{i+1}]|\mu_1, \ldots, \mu_i] \\
           & = & \mathbb{E}[\mathbbm{1}(Y \in A)|\mu_1, \ldots, \mu_i],
\end{eqnarray*}
  so indeed $\mu_i = \mathcal{L}(Y|\mu_1, \ldots, \mu_i)$. \\
  $\text{(iii)} \Leftrightarrow \text{(iv)}$:\, The sequence $(\mu_1, \ldots, \mu_n, \delta_Y)$ is a martingale if and only if $(\mu_1, \ldots, \mu_n)$ is a martingale and for all $A \in \mathcal{S}$,
\begin{eqnarray*}
  \mu_n(A) & = & \mathbb{E}[\delta_Y(A)|\mu_1,\ldots,\mu_n] \\
           & = & \mathbb{E}[\mathbbm{1}(Y \in A)|\mu_1,\ldots,\mu_n],
\end{eqnarray*}
which says exactly that $\mu_n = \mathcal{L}(Y|\mu_1,\ldots,\mu_n)$.
\end{proof}

\begin{proof}[Proof of Proposition \ref{distribution_of_Z}]
  For $y < y_t$, $F(y) \leq t$ since $F$ is non-decreasing, so
\begin{equation}\label{F_minus_y_t}
  F_-(y_t) \leq t.
\end{equation}

  For $y > y_t$, $F(y) > t$. Since $F(y_t) = \lim_{y \to y_t^+} F(y)$,
\begin{equation}\label{F_y_t}
  F(y_t) \geq t.
\end{equation}

  If $F_-(y_t) < F(y_t)$ then $0 \leq v_t \leq 1$ follows from inequalities (\ref{F_minus_y_t}) and (\ref{F_y_t}), and $Z(F, y_t, v_t) = t$ is straightforward.

  If $F_-(y_t) = F(y_t)$ then in fact $F_-(y_t) = F(y_t) = t$ by inequalities (\ref{F_minus_y_t}) and (\ref{F_y_t}), which gives $Z(F, y_t, v_t) = (0)(t) + (1)(t) = t$.

  For $y < y_t$ and $v \in [0,1]$, $Z(F, y, v) \leq t$ since $F(y) \leq t$. For $y > y_t$ and $v \in [0,1]$, $Z(F, y, v) > t$ since $F(y) > t$. For $v \in [0,v_t]$, $Z(F, y_t, v) \leq Z(F, y_t, v_t) = t$. And finally, for $v \in (v_t,1]$, we have $F_-(y_t) < F(y_t)$ and so $Z(F, y_t, v) > Z(F, y_t, v_t) = t$. Thus, for $y \in \mathbb{R}$ and $v \in [0,1]$, $Z(F, y, v) \leq t$ if and only if either $y < y_t$, or $y = y_t$ and $v \leq v_t$.

  Then
\begin{eqnarray*}
  \int_\mathbb{R} \int_0^1 \mathbbm{1}(Z(F, y, v) \leq t) \,dv \,dG(y) & = & \int_{(-\infty, y_t)} \int_0^1 \,dv \,dG(y) + \int_{\{y_t\}} \int_0^{v_t} \,dv \,dG(y) \\
                                                                       & = & G_-(y_t) + v_t \left(G(y_t) - G_-(y_t)\right) \\
                                                                       & = & Z(G, y_t, v_t),
\end{eqnarray*}
so we are done.
\end{proof}

  \begin{proof}[Proof of Proposition \ref{auto-calibration_PIT}] $\text{(i)} \Rightarrow \text{(ii)}$:\, Let $t \in (0,1)$. We shall show that
\begin{equation*}
  \mathbb{E}[\mathbbm{1}(Z(F,Y,V) \leq t)|\Psi] = t,
\end{equation*}
  so that the conditional distribution of $Z(F,Y,V)$ given $\Psi$ is $\mathcal{U}([0,1])$, as required.

  We apply Theorem \ref{Disintegration} with variable $\xi = (Y,V)$, $\sigma$-algebra $\Psi$ and random function $H(y,v) = \mathbbm{1}(Z(F,y,v) \leq t)$. Let $\mu = \phi^{-1}(F)$, so that $\mu$ is a version of $\mathcal{L}(Y|\Psi)$, and let $\lambda$ be the Lebesgue measure on $[0,1]$. Note that $\mathcal{L}((Y,V)|\Psi) = \mu \otimes \lambda$. We then have
\begin{eqnarray*}
  \mathbb{E}[\mathbbm{1}(Z(F,Y,V) \leq t)|\Psi] & = & \int \mathbbm{1}(Z(F,y,v) \leq t) \,(\mu \otimes \lambda)(dy,dv) \\
                                                & = & \int_\mathbb{R} \int_0^1 \mathbbm{1}(Z(F,y,v) \leq t) \,dv \,dF(y),
\end{eqnarray*}
  so we are done by Lemma \ref{PIT_is_uniform}.

$\text{(ii)} \Rightarrow \text{(i)}$:\, Let $G$ be a version of $F_{Y|\Psi}$.

  Since $F$ is $\Psi$-measurable, by Theorem \ref{Disintegration} we have for $t \in (0,1)$,
\begin{equation*}
  \mathbb{E}[\mathbbm{1}(Z(F, Y, V) \leq t)|\Psi] = \int_\mathbb{R} \int_0^1 \mathbbm{1}(Z(F, y, v) \leq t) \,dv \,dG(y).
\end{equation*}

  But $Z(F, Y, V)$ has conditional distribution $\mathcal{U}([0,1])$ given $\Psi$, so for $t \in (0,1)$,
\begin{equation*}
\mathbb{E}[\mathbbm{1}(Z(F, Y, V) \leq t)|\Psi] = t.
\end{equation*}

  Thus for $t \in (0,1)$,
\begin{equation}\label{a}
  \int_\mathbb{R} \int_0^1 \mathbbm{1}(Z(F, y, v) \leq t) \,dv \,dG(y) = t
\end{equation}
almost surely.

  Then with probability $1$, equation (\ref{a}) holds for $t \in (0,1) \cap \mathbb{Q}$. But both sides are right-continuous in $t$, so in fact with probability $1$, equation (\ref{a}) holds for $t \in (0,1)$.

  Then by Lemma \ref{PIT_is_uniform}, $F = G$ almost surely, so $F$ is a version of $F_{Y|\Psi}$ as required.
\end{proof}

\begin{proof}[Proof of Proposition \ref{transformation}]
  Let $i \in \{1, \ldots, n\}$. We have
\begin{equation*}
  \mathcal{L}(Y_i|\mu_1, Y_1, \ldots, \mu_{i-1}, Y_{i-1}, \mu_i) = \mu_i.
\end{equation*}

  Then for $A \in \mathcal{B}(\mathbb{R})$, using Theorem \ref{Disintegration},
\begin{eqnarray*}
\mathbb{E}[\mathbbm{1}(g(\mu_i,Y_i) \in A)|\mu_1, Y_1, \ldots, \mu_{i-1}, Y_{i-1}, \mu_i] & = & \int \mathbbm{1}(g(\mu_i,y) \in A) \,\mu_i(dy) \\
    & = & \int \mathbbm{1}(z \in A) \,((y \mapsto g(\mu_i,y))_*(\mu_i))(dz) \\
    & = & ((y \mapsto g(\mu_i,y))_*(\mu_i))(A) \\
    & = & \phi^{-1}(\Gamma_g(\mu_i))(A).
\end{eqnarray*}

Therefore
\begin{equation*}
  \mathcal{L}(g(\mu_i,Y_i)|\Gamma_g(\mu_1), g(\mu_1,Y_1), \ldots, \Gamma_g(\mu_{i-1}), g(\mu_{i-1},Y_{i-1}), \Gamma_g(\mu_i)) = \phi^{-1}(\Gamma_g(\mu_i)),
\end{equation*}
so we are done.
\end{proof}

\begin{proof}[Proof of Lemma \ref{F_inv_U}]
  For $i \in \{1, \ldots, n\}$, let $\mu_i = \phi^{-1}(F_i)$.

  To show that
\begin{equation*}
  \mathcal{L}((F_1^{-1}(U_1), \ldots, F_n^{-1}(U_n))|\mu_1, \ldots, \mu_n, \Psi) = \mu_1 \otimes \ldots \otimes \mu_n,
\end{equation*}
it suffices to show that $\mu_1 \otimes \ldots \otimes \mu_n$ satisfies the defining property of the regular conditional distribution on all sets of the form
\begin{equation*}
  (-\infty,y_1] \times \ldots \times (-\infty,y_n]
\end{equation*}
  for $y_1, \ldots, y_n \in \mathbb{R}$, by a monotone-class argument.

  Using Theorem \ref{Disintegration}, for $y_1, \ldots, y_n \in \mathbb{R}$,
\begin{eqnarray*}
\mathbb{E}\left[ \prod_{i=1}^n \mathbbm{1}(F_i^{-1}(U_i) \leq y_i) \middle| \mu_1, \ldots, \mu_n, \Psi \right] & = & \mathbb{E}\left[ \prod_{i=1}^n \mathbbm{1}(U_i \leq F_i(y_i)) \middle| \mu_1, \ldots, \mu_n, \Psi \right] \\
    & = & \int_0^1 \cdots \int_0^1 \left( \prod_{i=1}^n \mathbbm{1}(u_i \leq F_i(y_i)) \right) \,d{u_1} \cdots \,d{u_n} \\
    & = & \prod_{i=1}^n \left( \int_0^1 \mathbbm{1}(u_i \leq F_i(y_i)) \,d{u_i} \right) \\
    & = & \prod_{i=1}^n F_i(y_i),
\end{eqnarray*}
so we are done.
\end{proof}
\end{appendix}

\begin{acks}[Acknowledgments]
  Thomas Wilkinson was supported by the Engineering and Physical Sciences Research Council [grant number EP/W524451/1].
\end{acks}

\bibliographystyle{imsart-number}
\bibliography{refs}

\end{document}